\newcommand{\R}{\mathbb{R}}
\theoremstyle{plain}
\newtheorem{theorem}{Theorem}[section]
\newtheorem{lemma}[theorem]{Lemma}
\newtheorem{proposition}[theorem]{Proposition}
\newtheorem{corollary}[theorem]{Corollary}
\theoremstyle{definition}
\theoremstyle{remark}
\newtheorem{remark}[theorem]{Remark}
\DeclareMathOperator{\gra}{gra}
\newcommand{\tos}{\rightrightarrows} 
\DeclareMathOperator*{\conv}{conv}
\title{Remarks on pseudo-continuity }
\author{John Cotrina\thanks{Universidad del Pac\'ifico, Lima, Per\'u. Email: cotrina\_je@up.edu.pe} }
\begin{document}

\maketitle

\begin{abstract}
In this work we extend a maximum theorem proposed by Morgan and Scalzo. We also show some results of minimax inequalities which are equivalent to the famous Ky Fan minimax inequality. Additionally, we prove that
the existence result of Nash equilibria proposed by Morgan and Scalzo 
is actually equivalent to a classical result in the literature.

\bigskip

\noindent{\bf Keywords:   Maximum theorem; Minimax inequality; Nash games; Pseudo-continuity}

\bigskip

\noindent{{\bf MSC (2010)}: 47H10; 54C60; 91A10  } 

\end{abstract}
\section{Introduction}
Historically, the concept of pseudo-continuity was introduced by Morgan and Scalzo \cite{Morgan-Scalzo-2004}, where they used first the name \emph{sequential pseudo-continuity}. They, in \cite{Morgan2004,MORGAN2007}, showed that  pseudo-continuity for functions is equivalent to  continuity for preference relations, among other properties.  They also studied, in \cite{Morgan-Scalzo-2007}, the asymptotical behaviour of finite economies. Moreover, they  generalized the Tikhonov well-posed result for optimization problems in \cite{MS-2006}. 

\,

The notion of pseudo-continuity has gained more attention, for instance   Anh, Khanh and Van \cite{ANH2011} studied the well-posedness for quasi-equilibrium problems and quasi-optimization problems.
Wangkeeree, Bantaojai and Yimmuang \cite{Wang2016} dealt with continuity properties of 
the set-valued solution map for a special kind of quasi-equilibrium problem.
Al-Homidan, Hadjisavvas and Shaalan \cite{quasiconvex}  implicitly used the concept of pseudo-continuity in order to characterize quasi-convex functions as a composition of two functions where one of them has the property that every local minimum is a global minimum.
Recently, in \cite{BCC} the authors used the pseudo-continuity assumption in order to establish the existence of solution for a particular generalized Nash game proposed by Rosen \cite{Rosen}, which generalizes many existence results in the literature. 

\,

The Berge maximum theorem \cite{aliprantis06} is an important tool in the area of general equilibrium theory and in mathematical economics.  
It establishes that the argmax correspondence is upper hemicontinuous and its value function is continuous, under certain continuity assumptions.  
In \cite{Morgan2004,MORGAN2007}, Morgan and Scalzo presented a  maximum theorem under a pseudo-continuity assumption. 
However, they  do not say anything about the value function. In that sense, we extend the result proposed by Morgan and Scalzo showing that the value function is pseudo-continuous. 

\,

Relaxing the continuity assumption allows us to deal with a large class of problems in optimization and game theory. One of the first existence result of Nash equilibria for discontinuous games is due Dasgupta and Maskin \cite{Dansupta}, which generalizes the one given by Debreu \cite{Debreu} for continuous games. In a similar way,  Morgan and Scalzo \cite{MORGAN2007} presented a generalization of Debreu's result using pseudo-continuity, despite their result being a consequence of the one proposed by Reny \cite{Reny}. However, we will show that their result can be obtain from Debreu's theorem thanks to a Scalzo' result in \cite{Scalzo}. On the other hand, Qiu and Peng \cite{Qiu-Peng} showed a result of minimax inequality under the pseudo-continuity assumption. We will show that this result is actually equivalent to Ky Fan's minimax inequality.

\,

The remainder of the paper is organized as follows. In Section \ref{pre}, we introduce the concept of pseudo-continuity for functions and continuity  correspondences used in our study. Section \ref{Berge} is devoted to the Berge maximum theorem. In Section \ref{Ky} we present some results about pseudo-continuity, which are equivalent to the famous Ky Fan's minimax inequality.
Finally, in Section \ref{g-Nash} we show that the results on the existence of Nash equilibria proposed by Morgan and Scalzo  \cite{Morgan2004,MORGAN2007}, Debreu  \cite{Debreu} and, Arrow and Debreu  \cite{Arrow-Debreu} are equivalent.
\section{Definitions, Notations and Preliminary results}\label{pre}
Given a topological space $X$, an extended real-valued function $f:X\to\R\cup\{\pm\infty\}$ is said to be:
\begin{itemize}
\item \emph{upper semi-continuous} if, for any $x\in X$ and any $\lambda\in\R$ such that $f(x)<\lambda$, there exists a neighbourhood $\mathscr{V}_x$ of $x$ satisfying
\[
f(x')<\lambda,\mbox{ for all }x'\in \mathscr{V}_x;
\]
\item \emph{upper pseudo-continuous} if, for any $x,y\in X$  such that $f(x)<f(y)$, there exists a neighbourhood $\mathscr{V}_x$ of $x$ satisfying
\[
f(x')<f(y),\mbox{ for all }x'\in \mathscr{V}_x;
\]
\item \emph{transfer upper continuous} if, for any $x,y\in X$ such that $f(x)<f(y)$, there exist a neighbourhood $\mathscr{V}_x$ of $x$, and $y'\in X$ satisfying
\[
f(x')<f(y'),\mbox{ for all }x'\in \mathscr{V}_x.
\]
\end{itemize}

\begin{remark}
Clearly, upper semi-continuity implies upper pseudo-continuity, which in turn implies transfer upper continuity. The converse implications are not true in general. The concept of pseudo-continuity was introduced by Morgan and Scalzo in \cite{MORGAN2007,Morgan2004}, while the transfer continuity was studied by Tian and Zhou in \cite{Tian95}.

On the other hand, it is well known that the sum of two upper semi-continuous function is upper semi-continuous. However, in \cite{JC-MT-JZ} the authors showed that the same does not hold for transfer upper continuity. In a similar way, it does not hold for upper pseudo-continuity, a contra-example can be found in \cite{Wang2016}.
\end{remark}

Associated to the function $f$ we consider  the following set
\[
U_f(\lambda)=\{x\in X:~f(x)\geq\lambda\},
\]
where $\lambda\in\R$.
It is clear that a function $f$ is upper semi-continuous if, and only if, the set $U_f(\lambda)$ is closed, for all $\lambda\in\R$. In a similar way, $f$ is upper pseudo-continuous if, and only if, the set $U_f(\lambda)$ is closed, for all $\lambda\in f(X)$, see \cite{MORGAN2007,Morgan2004}. Moreover,  in \cite{Tian95} the authors showed that $f$ is transfer upper continuous if, and only if,
\[
\bigcap_{x\in X}U_f(x)=\bigcap_{x\in X}\overline{U_f(x)},
\]
where we use $U_f(x)$ instead $U_f(f(x))$.

\,

The function $f$ is said to be lower  (pseudo) semi-continuous, if $-f$ is upper (pseudo) semi-continuous. Also, a real-valued function is (pseudo) continuous if, and only if, it is both upper and lower (pseudo) semi-continuous.

\,

The following result is Theorem 3.2 in \cite{Scalzo}.
\begin{proposition}[Scalzo]\label{pseudo-CC}
Let $X$ be a connected topological space and $f:X\to\R$ be a function. Then $f$ is pseudo-continuous if, and only if, there exist a continuous function $u:X\to\R$ and a increasing function $h:u(X)\to\R$ such that
\[
f=h\circ u.
\]
\end{proposition}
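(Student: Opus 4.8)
The plan is to prove the two implications separately, the forward one being elementary and the reverse one being where connectedness is used. I read ``increasing'' as \emph{strictly} increasing, which is necessary: with a merely nondecreasing $h$ the composite $h\circ u$ can fail to be upper pseudo-continuous (take $u$ the identity on an interval and $h$ a step function). For the \emph{if} part, let $f=h\circ u$ with $u$ continuous and $h$ strictly increasing on $u(X)$. If $f(x)<f(y)$, then $h(u(x))<h(u(y))$ forces $u(x)<u(y)$; the set $\{z\in X:\ u(z)<u(y)\}$ is open by continuity of $u$, contains $x$, and on it $f(z)=h(u(z))<h(u(y))=f(y)$. Thus $f$ is upper pseudo-continuous, and applying the same argument to $-f=(-h)\circ u$ gives lower pseudo-continuity.

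For the \emph{only if} part, assume $f$ is pseudo-continuous and consider the total preorder $x\preceq y\Leftrightarrow f(x)\le f(y)$. It is \emph{continuous}: since $f(x)\in f(X)$ for every $x$, pseudo-continuity makes both $\{z:\ f(z)<f(x)\}$ and $\{z:\ f(z)>f(x)\}$ open. The problem then reduces to producing a \emph{continuous} utility $u$ for this preorder, for then the assignment $u(x)\mapsto f(x)$ is well defined (equal values of $u$ force equal values of $f$) and strictly increasing on $u(X)$, yielding the required $h$ with $f=h\circ u$.

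To produce $u$ I would reparametrise the range $I=f(X)\subseteq\R$. Call a bounded maximal subinterval of $\R\setminus I$ lying between two points of $I$ a \emph{gap}, and say it is \emph{doubly closed} if both its endpoints belong to $I$. The decisive use of connectedness is that $I$ has no doubly closed gap: if a gap $(a,b)$ had $a,b\in I$, then for $c\in(a,b)$ one has $\{f<c\}=\{f<b\}$ and $\{f>c\}=\{f>a\}$ (no value of $f$ lies strictly between $a$ and $b$); both are open because $a,b\in f(X)$, and since $c\notin I$ they split $X$ into two nonempty open sets, contradicting connectedness. As every gap is therefore open on at least one side, one can choose a strictly increasing $g\colon I\to\R$ collapsing each gap to a single point without identifying two points of $I$, so that $g(I)$ has no gaps. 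Setting $u:=g\circ f$ and $h:=g^{-1}$ on $u(X)=g(I)$, it remains to see that $u$ is continuous. For $t\in u(X)$ this follows from continuity of the preorder, since $\{u<t\}=\{z:\ f(z)<f(x_0)\}$ for any $x_0$ with $u(x_0)=t$; for $t\notin u(X)$, the absence of gaps in $g(I)$ makes the lower section $\{s\in I:\ g(s)<t\}$ have no greatest element, whence $\{u<t\}=\{f<\sigma\}$ with $\sigma=\sup\{s\in I:\ g(s)<t\}$, a set that is open (directly when $\sigma\in I$, and as $\bigcup_k\{f<\sigma_k\}$ for any $\sigma_k\uparrow\sigma$ in $I$ when $\sigma\notin I$). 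The argument for $\{u>t\}$ is symmetric, so $u$ is continuous and $f=h\circ u$.

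The main obstacle is precisely this continuity of $u=g\circ f$: pseudo-continuity governs sub- and superlevel sets only at \emph{attained} values, so the delicate thresholds $t$ are those landing in gaps of the range, and it is exactly the no-two-sided-gap property --- that is, connectedness --- that stops a sublevel set $\{u<t\}$ from acquiring a largest element and thereby becoming closed instead of open. If writing the collapsing map $g$ explicitly and carrying out this bookkeeping proved cumbersome, I would instead invoke Debreu's open-gap lemma to package the reparametrisation and the continuity check.
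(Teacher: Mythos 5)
The paper does not actually prove Proposition \ref{pseudo-CC}: it quotes it as Theorem 3.2 of Scalzo's paper, and the remark that follows points to Debreu's Proposition 4 as a close relative. So your attempt has to be measured against that literature route: pseudo-continuity of $f$ is equivalent to continuity of the total preorder it induces, and a continuous, representable preorder admits a continuous utility via Debreu's open gap lemma. Within your proposal, the ``if'' direction is correct (and your reading of ``increasing'' as strictly increasing is the right one), the reduction of the ``only if'' direction to producing a continuous utility is correct, and your connectedness argument showing that no gap of $I=f(X)$ has both endpoints in $I$ is sound.

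The genuine gap is the step ``one can choose a strictly increasing $g\colon I\to\R$ collapsing each gap to a single point \dots so that $g(I)$ has no gaps.'' The property you established --- every gap has at least one endpoint outside $I$ --- does not imply that such a $g$ exists. Concretely, $I=[0,1)\cup(1,2]$ has as its only gap the singleton $\{1\}$, whose endpoint lies outside $I$; yet for every strictly increasing $g$ the number $\sup g([0,1))\le\inf g((1,2])$ is attained by neither piece (strictly increasing maps preserve the absence of a maximum of $[0,1)$ and of a minimum of $(1,2]$), so $g(I)$ always retains a gap. Likewise $I=\Q\cap[0,1]$ satisfies your property, but any strictly increasing image of it is countably infinite and hence never convex. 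In fact, a strictly increasing $g$ with $g(I)$ gap-free is exactly an order isomorphism of $I$ onto a real interval, and that is equivalent to the conclusion being proved (the theorem itself furnishes one, namely $h^{-1}$, since $u(X)$ is an interval); so, as written, this step assumes the theorem, and your proof nowhere shows that ranges like the two sets above cannot occur. The repair is to aim lower: one never needs to eliminate gaps, only the \emph{half-open} ones (exactly one endpoint in the range). At a threshold $t$ inside a gap both of whose endpoints $a<b$ are attained, $\{u<t\}=\{u<b\}$ and $\{u>t\}=\{u>a\}$ are open, because the strict contour sets of $u=g\circ f$ at attained values coincide with those of $f$; at a threshold inside a gap neither of whose endpoints is attained, $\{u<t\}$ and $\{u>t\}$ are unions of such sets, since the values of $u$ accumulate on the gap from both sides. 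Removing the half-open gaps is precisely Debreu's open gap lemma; it is a genuine theorem, not the optional bookkeeping device your closing sentence makes it out to be. Note finally that connectedness excludes exactly the gaps that are harmless anyway (both endpoints attained) and does not exclude the dangerous half-open ones; once the open gap lemma is invoked, connectedness plays no role in this direction at all, which is why Debreu's Proposition 4 holds on arbitrary topological spaces, as the paper's remark indicates.
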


\begin{remark}
A similar result to the previous one is Proposition 4 in \cite{Debreu64}.
\end{remark}

Thanks to Proposition 2.2 in \cite{Morgan-Scalzo-2007}, we can state Rader's utility representation \cite{Rader} as follow.
\begin{proposition}[Rader]\label{Rader}
Let $X$ be a topological space with a countable base and $f:X\to\R$ be a function. Then $f$ is upper pseudo-continuous if, and only if, there exist a upper semi-continuous function $u:X\to\R$ and an increasing function $h:u(X)\to\R$ such that
\[
f=h\circ u.
\]
\end{proposition}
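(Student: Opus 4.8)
The plan is to prove both implications, treating the sufficiency as the routine direction and the necessity as the substantial one, where the representation must actually be manufactured. Throughout I read ``increasing'' in the strict sense, as is standard for these representation results; this is exactly what is needed to transfer strict inequalities back and forth through $h$.

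For sufficiency, suppose $f=h\circ u$ with $u$ upper semi-continuous and $h:u(X)\to\R$ increasing. Given $x,y$ with $f(x)<f(y)$, that is $h(u(x))<h(u(y))$, monotonicity of $h$ forces $u(x)<u(y)$ (otherwise $u(x)\ge u(y)$ would give $h(u(x))\ge h(u(y))$). Picking any $\lambda$ with $u(x)<\lambda<u(y)$ and invoking upper semi-continuity of $u$ at $x$, I obtain a neighbourhood $\mathscr{V}_x$ of $x$ on which $u<\lambda$; for every $x'\in\mathscr{V}_x$ we then have $u(x')<\lambda<u(y)$, whence $f(x')=h(u(x'))<h(u(y))=f(y)$. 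Thus $f$ is upper pseudo-continuous.

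For necessity, the idea is to reduce to Rader's classical utility-representation theorem. First I translate the pseudo-continuity of $f$ into a continuity property of the complete preorder $\succeq$ defined by $x\succeq y\iff f(x)\ge f(y)$: by the characterization recalled above, upper pseudo-continuity of $f$ is equivalent to the closedness of every weak upper contour set $U_f(\lambda)=\{x:f(x)\ge\lambda\}$ with $\lambda\in f(X)$, i.e.\ to $\succeq$ being an upper semi-continuous preorder (this is the role of Proposition 2.2 in \cite{Morgan-Scalzo-2007}). Since $X$ has a countable base, Rader's theorem then produces an upper semi-continuous $u:X\to\R$ representing $\succeq$, that is $f(x)\le f(y)\iff u(x)\le u(y)$. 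It remains to build $h$. Because $u$ and $f$ represent the same preorder, $u(x)=u(x')$ forces $f(x)=f(x')$, so $h(t):=f(x)$ for any $x$ with $u(x)=t$ is a well-defined function on $u(X)$; the equivalence $u(x)<u(x')\iff f(x)<f(x')$ shows $h$ is increasing, and $f=h\circ u$ holds by construction.

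The main obstacle is concentrated entirely in the necessity, namely the existence of an upper semi-continuous representative $u$ of $\succeq$ — precisely the content of Rader's theorem, and the point where second countability is indispensable. Concretely one enumerates a countable base $\{V_n\}$ and sets $u(x)=\sum\{2^{-n}:V_n\subseteq\{z:f(z)<f(x)\}\}$; order preservation is then immediate, the strict part following because each strict lower set $\{z:f(z)<f(x)\}$ is open (which is exactly upper pseudo-continuity), so that a basic neighbourhood of $x$ lying inside $\{z:f(z)<f(y)\}$ but not inside $\{z:f(z)<f(x)\}$ separates their index sets whenever $f(x)<f(y)$. The genuinely delicate step — the one requiring Rader's argument rather than a one-line estimate — is verifying that this $u$ is upper semi-continuous; everything else, namely the sufficiency direction and the passage from $u$ to $h$, is bookkeeping about the induced order.
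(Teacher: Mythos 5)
Your main chain of reasoning — sufficiency by passing strict inequalities through a strictly increasing $h$, and necessity by translating upper pseudo-continuity into upper semicontinuity of the induced preorder (closedness of $U_f(\lambda)$ for $\lambda\in f(X)$), invoking Rader's classical representation theorem on the second countable space, and then defining $h$ on $u(X)$ by $h(u(x))=f(x)$ — is correct, and it is precisely the paper's own justification: the paper gives no written proof beyond combining Proposition 2.2 of Morgan--Scalzo with Rader's theorem. Your verification that $h$ is well defined and strictly increasing, and the sufficiency direction, are sound.

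The genuine problem is your final paragraph, where you assert that Rader's theorem is obtained from the construction $u(x)=\sum\{2^{-n}:V_n\subseteq\{z:f(z)<f(x)\}\}$ and that the ``genuinely delicate step'' is verifying that this $u$ is upper semi-continuous. That step is not delicate; it is false, because this $u$ need not be upper semi-continuous. Take $X=\{0,2\}\cup\{1/k:k\geq 1\}$ with the topology inherited from $\R$, and $f(0)=f(2)=0$, $f(1/k)=1/k$. Every strict lower set $\{z:f(z)<f(x)\}$ is open, so $f$ is upper pseudo-continuous (in fact upper semi-continuous). The point $2$ is isolated, so \emph{any} countable base of $X$ must contain the set $\{2\}$, say $V_{n_0}=\{2\}$. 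Since $f(2)=0<f(1/k)$, we have $V_{n_0}\subseteq\{z:f(z)<f(1/k)\}$ for every $k$, whereas $\{z:f(z)<f(0)\}=\emptyset$ contains no basic set; hence $u(0)=0$ while $u(1/k)\geq 2^{-n_0}$ for all $k$. As $1/k\to 0$, the function $u$ fails to be upper semi-continuous at $0$, even though it does represent the order. So the base-sum construction only yields order preservation, and no amount of work will recover upper semicontinuity from it. Rader's actual argument is structurally different: one first produces a countable \emph{order-dense} family $\{z_n\}$ (second countability handles the non-jump pairs, and a separate counting argument shows the order has only countably many jumps), and then sets $u=\sum_n 2^{-n}\mathbf{1}_{U_f(f(z_n))}$, a uniformly convergent sum of indicator functions of \emph{closed} upper contour sets, which is upper semi-continuous by construction. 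If you intend to cite Rader's theorem as a black box — which is legitimate here, and is what the paper does — delete the sketch; if you intend to prove it, the sketch as written leads into a dead end.
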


Given a convex set $X$ of a  vector space, a function $f:X\to\R\cup\{\pm\infty\}$ is said to be \emph{quasi-concave} if, for all $\lambda\in\R$, $U_f(\lambda)$ is convex.
Also, $f$ is called \emph{quasi-convex} if, $-f$ is quasi-concave.

\,

As a consequence of Proposition \ref{Rader}, we give an  answer to a question proposed by Al-Homidan, Hadjisavvas and Shaalan in \cite{quasiconvex}. Before that we need to introduce the following definition:
A function $f:\R^n\to\R\cup\{-\infty\}$ is said to be \emph{neatly quasi-concave} \cite{quasiconvex} if it is quasi-concave and for every $x$ with $f(x)<\sup f$, the sets
$U_f(x)$ and $U_f^<(x)=\{y\in\R^n:~f(y)>f(x)\}$ have the same closure.

\,

The following result is Theorem 4.1 in \cite{quasiconvex}, but we state it in terms of quasi-concavity instead of quasi-convexity.
\begin{theorem}\label{quasiconvex}
For every quasi-concave and upper pseudo-continuous function $f:\R^n\to\R\cup\{-\infty\}$, there exist a neatly quasi-concave and upper pseudo-continuous function $g:\R^n\to\R$, and a nonincreasing function $h:g(\R^n)\to\R\cup\{-\infty\}$ such that $f=h\circ g$.
\end{theorem}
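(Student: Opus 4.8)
The plan is to read the statement as the quasi-concave mirror of Theorem 4.1 in \cite{quasiconvex} and to isolate exactly what the pseudo-continuity hypothesis buys us, namely that the level-set characterization recalled in Section~\ref{pre} (and, conceptually, Rader's representation in Proposition~\ref{Rader}) lets us treat $f$ as if its upper level sets were closed. Concretely, I would first record the two structural facts that drive everything: since $f$ is quasi-concave, each $U_f(\lambda)$ is convex, and since $f$ is upper pseudo-continuous, $U_f(\lambda)$ is closed for every $\lambda\in f(\R^n)$. Thus $f$ is encoded by a nested family of closed convex sets indexed by its own range, together with the distinguished value $\sup f$ at which neatness is not required.

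The construction of $g$ then mimics the one of \cite{quasiconvex}, but fed with this closed-convex family. I would build $g:\R^n\to\R$ from the sets $\overline{U_f^<(x)}$, arranged so that the strict and non-strict upper level sets of $g$ share their closures below $\sup g$; this is precisely what makes $g$ neatly quasi-concave, while convexity of its level sets is inherited from quasi-concavity of $f$. A convenient concrete device is a radial parametrization of the nested family through distance functions $x\mapsto\dist(x,\overline{U_f^<(\lambda)})$, which are continuous and convex, so that $g$ separates points that $f$ collapses on a non-top plateau while keeping its upper level sets convex. I would then define the factor on $g(\R^n)$ by $h(g(x))=f(x)$: because $g$ refines the order induced by $f$, this is well defined, monotone (its direction forced by the common quasi-concave orientation of $f$ and $g$), and valued in $\R\cup\{-\infty\}$, the value $-\infty$ being attained exactly where $f=-\infty$.

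To finish I would verify the three required properties of $g$: quasi-concavity (immediate from convexity of the level sets), neatness (the defining feature of the construction, available precisely where $f(x)<\sup f$), and upper pseudo-continuity. For the last point I would again invoke the level-set characterization: it suffices that $U_g(\mu)$ be closed for every $\mu\in g(\R^n)$, and here Proposition~\ref{Rader} is the conceptual guarantee that a reparametrization of a closed-convex family admits an upper semi-continuous representative and is therefore upper pseudo-continuous, even though the plateau-separating $g$ need not itself be upper semi-continuous.

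I expect the main obstacle to be this last step taken together with neatness: one must refine $f$ on every non-top plateau, genuinely separating points at which $f$ is constant, without breaking convexity of the upper level sets or the order-compatibility that makes $h$ monotone, and simultaneously certify that the resulting $g$ is only required to be—and indeed is—upper pseudo-continuous rather than upper semi-continuous. Handling the extended value $-\infty$ and the excluded top level $\sup f$ correctly inside the same parametrization is the delicate bookkeeping; everything else is a routine transcription of the quasi-convex argument of \cite{quasiconvex} into the quasi-concave, pseudo-continuous setting.
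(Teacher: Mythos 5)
The paper offers no internal proof of Theorem \ref{quasiconvex}: it is quoted from Theorem 4.1 of \cite{quasiconvex}, transcribed through the substitution $f\mapsto -f$, which turns quasi-convex, lower pseudo-continuous functions with values in $\R\cup\{+\infty\}$ into quasi-concave, upper pseudo-continuous ones with values in $\R\cup\{-\infty\}$. So the only argument the paper's presentation requires is that two-line symmetry, whereas you set out to rebuild the construction of \cite{quasiconvex} from scratch. As a proof, your reconstruction is not complete: the function $g$ is never actually defined --- ``a radial parametrization of the nested family through distance functions'' is a suggestion, not a construction --- and the three properties that constitute the whole content of the cited theorem (neatness of $g$, upper pseudo-continuity of $g$, and the order-compatibility making $h$ well defined and monotone) are exactly the points you defer to ``verification''. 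In particular, invoking Proposition \ref{Rader} as a ``conceptual guarantee'' of upper pseudo-continuity is not a checkable step: to use it you must exhibit a factorization $g=v\circ u$ with $u$ upper semi-continuous and $v$ increasing, which your sketch never produces.

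There is, moreover, a concrete directional obstruction hidden by your hedge that the monotonicity of $h$ is ``forced by the common quasi-concave orientation of $f$ and $g$''. The direction actually forced is nondecreasing, while the statement asks for nonincreasing, and with a nonincreasing $h$ the statement is false as printed: if $g$ is quasi-concave and $h$ is nonincreasing, then every upper level set of $f=h\circ g$ has the form $\{x: g(x)\le t\}$ or $\{x: g(x)<t\}$, whose complement (either $U_g(t)$ or a nested union of the convex sets $U_g(s)$, $s>t$) is convex; taking $f(x)=-\|x\|$, which is continuous and quasi-concave, this would make $\{x:\|x\|>c\}$ convex, a contradiction. The faithful transcription of Theorem 4.1 of \cite{quasiconvex} pairs a neatly quasi-concave, upper pseudo-continuous $g$ with a \emph{nondecreasing} $h$ (equivalently, a neatly quasi-convex $g$ with a nonincreasing $h$); the printed statement mixes the two. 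No completion of your construction can close this gap without amending the statement, and had you taken the citation route --- apply the quasi-convex theorem of \cite{quasiconvex} to $-f$ and negate --- the mismatch would have surfaced immediately.
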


The question proposed by Al-Homidan, Hadjisavvas and Shaalan in \cite{quasiconvex} is the following: \emph{Is it possible to choose an upper semicontinuous function $g$ in the previous theorem when $f$ is upper semicontinuous}? The following corollary gives a positive answer to this question.

\begin{corollary}
For every quasi-concave and upper pseudo-continuous function $f:\R^n\to\R\cup\{-\infty\}$, there exist a neatly quasi-concave and upper semicontinuous function $g:\R^n\to\R$, and a nonincreasing function $h:g(\R^n)\to\R\cup\{-\infty\}$ such that $f=h\circ g$.
\end{corollary}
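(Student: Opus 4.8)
The plan is to combine the decomposition furnished by Theorem \ref{quasiconvex} with Rader's representation (Proposition \ref{Rader}), applied to the \emph{inner} factor. Starting from a quasi-concave and upper pseudo-continuous $f:\R^n\to\R\cup\{-\infty\}$, Theorem \ref{quasiconvex} produces $f=h_1\circ g_1$, where $g_1:\R^n\to\R$ is neatly quasi-concave and upper pseudo-continuous and $h_1:g_1(\R^n)\to\R\cup\{-\infty\}$ is nonincreasing. The key observation is that $g_1$ is a \emph{real-valued} upper pseudo-continuous function on $\R^n$, which carries a countable base; hence Proposition \ref{Rader} is applicable to $g_1$, and this is exactly the extra mileage that lets us replace upper pseudo-continuity by upper semicontinuity.

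Next I would invoke Rader's representation for $g_1$: there exist an upper semicontinuous function $u:\R^n\to\R$ and an increasing function $h_2:u(\R^n)\to\R$ with $g_1=h_2\circ u$. Setting $g:=u$ and $h:=h_1\circ h_2$, one immediately gets $f=h_1\circ g_1=h_1\circ h_2\circ u=h\circ g$. Here $g$ is upper semicontinuous by construction, its range satisfies $g(\R^n)=u(\R^n)$ so that $h$ is well defined on $g(\R^n)$, and $h$ is nonincreasing since it is the composition of the nonincreasing $h_1$ with the increasing $h_2$. Thus all the requirements except neat quasi-concavity of $g$ hold almost by inspection.

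It remains to verify that $g=u$ is neatly quasi-concave, and this is the delicate point. The argument rests on $h_2$ being strictly increasing, hence order-preserving and injective on its domain, so that $u(x)\ge u(y)\iff g_1(x)\ge g_1(y)$ for all $x,y$. From this equivalence I would deduce, first, that $g$ is quasi-concave (a strictly monotone transformation preserves quasi-concavity), and second, that the two relevant families of level sets coincide exactly: $U_g(x)=U_{g_1}(x)$ and $U_g^<(x)=U_{g_1}^<(x)$ for every $x$, together with $g(x)<\sup g\iff g_1(x)<\sup g_1$. Since the defining sets of the neatness condition for $g$ are literally the same sets as those for $g_1$, they share the same closure, and neat quasi-concavity of $g$ is inherited from that of $g_1$.

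The main obstacle is precisely this last transfer: one must ensure that the level-set structure of the inner function is genuinely preserved under $h_2$, which is why the strict monotonicity of the Rader factor is essential—if $h_2$ were merely nondecreasing, the superlevel sets of $g$ and $g_1$ could fail to coincide and the neatness condition would not transfer. Once strict monotonicity is exploited as above, no further computation is needed and the corollary follows.
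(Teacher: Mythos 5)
Your proof is correct and takes essentially the same route as the paper's: apply Theorem \ref{quasiconvex} to write $f=h_1\circ g_1$, then apply Rader's representation (Proposition \ref{Rader}) to the upper pseudo-continuous inner factor $g_1$ to get $g_1=h_2\circ u$ with $u$ upper semicontinuous, and conclude with $g=u$, $h=h_1\circ h_2$ nonincreasing. The only difference is that you spell out the transfer of neat quasi-concavity to $g$ via the order-preservation of the strictly increasing Rader factor, a step the paper dismisses with ``it is not difficult to see.''
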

\begin{proof}
From Theorem \ref{quasiconvex}, there exist a neatly quasi-concave  function $g_1:\R^n\to\R$, and a nonincreasing function $h_1:g(\R^n)\to\R\cup\{-\infty\}$ such that $f=h_1\circ g_1$. Moreover, the function $g_1$ is upper pseudo-continuous. Thus, Proposition \ref{Rader} guarantees the existence of an upper semi-continuous function $g:\R^n\to\R$ and an increasing function $v:g(\R^n)\to\R$ such that $g_1=v\circ g$. It is not difficult to see that $g$ is neatly quasi-concave.
Since $h_1\circ v$ is a nonincreasing function, the result follows. 
\end{proof}

Given a convex set $X$ of a vector space and a function $f:X\to\R\cup\{\pm\infty\}$, the \emph{quasi-concave regularization} $f_q$ of $f$  is defined as
\[
f_q(x)=\sup\{\lambda\in\R:~x\in\conv(U_f(\lambda))\},
\]
where $\conv(U_f(\lambda))$ means the convex hull of $U_f(\lambda)$.
The function $f_q$ is the smallest quasi-concave function which is lower bounded by $f$.

\begin{lemma}\label{reg-quasi-concave}
Let $X$ be a convex set of a vector space and $f,g:X\to\R$ be two functions such that
$f\leq g$. If $g$ is quasi-concave, then $f_q$ is a real-valued function. Moreover, $f\leq f_q\leq g$.
\end{lemma}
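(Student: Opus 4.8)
The plan is to sandwich $f_q(x)$ between $f(x)$ and $g(x)$ pointwise; this simultaneously yields that $f_q$ is real-valued and the desired inequalities $f\leq f_q\leq g$. The whole argument reduces to tracking how the level sets $U_f(\lambda)$ behave under the two hypotheses $f\leq g$ and the quasi-concavity of $g$.

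First I would establish the lower bound $f\leq f_q$. Fix $x\in X$ and take $\lambda=f(x)$. Then $x\in U_f(f(x))\subseteq\conv(U_f(f(x)))$, so $f(x)$ belongs to the set of scalars over which the supremum defining $f_q(x)$ is taken; hence $f_q(x)\geq f(x)$. In particular this set is nonempty, so $f_q(x)>-\infty$.

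The heart of the argument is the upper bound $f_q\leq g$, and this is where quasi-concavity of $g$ is used. Since $f\leq g$, the inclusion $U_f(\lambda)\subseteq U_g(\lambda)$ holds for every $\lambda\in\R$: if $f(x')\geq\lambda$ then $g(x')\geq f(x')\geq\lambda$. Because $g$ is quasi-concave, each $U_g(\lambda)$ is convex, and as the convex hull is the smallest convex set containing $U_f(\lambda)$, we get $\conv(U_f(\lambda))\subseteq U_g(\lambda)$. Consequently, if $x\in\conv(U_f(\lambda))$ then $g(x)\geq\lambda$, so every $\lambda$ admissible in the definition of $f_q(x)$ satisfies $\lambda\leq g(x)$. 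Taking the supremum gives $f_q(x)\leq g(x)<+\infty$.

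Combining the two bounds yields $f(x)\leq f_q(x)\leq g(x)$ for every $x\in X$; since $f$ and $g$ take values in $\R$, the value $f_q(x)$ is squeezed between two finite quantities and is therefore real, proving both assertions at once. I do not expect a genuine obstacle here: the only step requiring care is the inclusion $\conv(U_f(\lambda))\subseteq U_g(\lambda)$, which rests on the minimality of the convex hull together with the convexity of the upper level set $U_g(\lambda)$ guaranteed by the quasi-concavity of $g$.
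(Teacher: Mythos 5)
Your proof is correct and is exactly the argument the paper leaves implicit: the paper's proof is the one-line remark that the lemma ``follows from the definition of quasi-concave regularization,'' and your three steps (membership of $f(x)$ in the admissible set via $x\in U_f(f(x))$, the inclusion $\conv(U_f(\lambda))\subseteq U_g(\lambda)$ from quasi-concavity of $g$, and the resulting sandwich) are precisely the details being omitted. Nothing is missing.
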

\begin{proof}
It follows from definition of quasi-concave regularization.
\end{proof}

We finish this section recalling continuity notions for correspondences. Before that, we introduce the concept of correspondence. 

\,

Let $U,V$ be non-empty sets. A \emph{correspondence} or \emph{set-valued map} $T:U\tos V$ is an application $T:U\to \mathcal{P}(V)$, that is, for $u\in U$, $T(u)\subset V$. 
The \emph{graph} of $T$ is defined as
\[\gra(T)=\big\{(u,v)\in U\times V\::\: v\in T(u)\big\}.\]
Let $T:X\tos Y$ be a correspondence with $X$ and $Y$ two topological spaces.
The map $T$ \cite{aliprantis06} is said  to be:
\begin{itemize}
 \item \emph{closed}, when $\gra(T)$ is a closed subset of $X\times Y$;
 \item \emph{lower hemicontinuous} when for all $x\in X$ and any open set $V\subset Y$, with $T(x)\cap V\neq\emptyset$, there exists $\mathscr{V}_x$ neighbourhood of $x$ such that $T(x')\cap V\neq\emptyset$ for all $x'\in \mathscr{V}_x$;
 \item \emph{upper hemicontinuous} when for all $x\in X$ and any open set $V$, with $T(x)\subset V$,  there exists $\mathscr{V}_x$ neighbourhood of $x$ such that $T(\mathscr{V}_x) \subset V$;
 \item \emph{continuous} when it is upper and lower hemicontinuous.
 \end{itemize}

We state below the well known Kakutani's fixed point theorem, see \cite{Glicksberg,Fan}.
\begin{theorem}[Kakutani-Fan-Glicksberg]\label{KFFPT}
 Let $X$ be a non-empty convex and compact subset of a Hausdorff locally convex topological vector space $Y$  and let $T:X\tos X$ be a correspondence. If $T$ is upper hemicontinuous with convex, closed and non-empty values, then there exists $x_0\in X$ such that $x_0\in T(x_0)$.
\end{theorem}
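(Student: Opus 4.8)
The plan is to reduce this set-valued, infinite-dimensional statement to the single-valued, finite-dimensional Brouwer fixed point theorem (which I will take for granted) through a double approximation: I would approximate $T$ by genuine continuous functions and approximate $Y$ by finite-dimensional polytopes, and then recover an honest fixed point in the limit by exploiting compactness of $X$ together with the convexity hypotheses on the values of $T$.

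First I would fix the directed family $\mathcal{U}$ of convex, symmetric, open neighbourhoods of the origin in $Y$, ordered by reverse inclusion, and for each $U\in\mathcal{U}$ build a continuous selection approximating $T$. Using compactness of $X$, I extract a finite subcover $X\subseteq\bigcup_{i=1}^{n}(x_i+U)$, take a partition of unity $\{p_i\}_{i=1}^{n}$ subordinate to it (available since a compact Hausdorff space is normal), and choose $y_i\in T(x_i)$, which is legitimate because the values are non-empty. Setting $f_U(x)=\sum_{i=1}^{n}p_i(x)\,y_i$ produces a continuous map whose range lies in the finite-dimensional compact convex polytope $K_U=\conv\{y_1,\dots,y_n\}\subseteq X$; restricted to $K_U$ it is a continuous self-map, so Brouwer's theorem provides a fixed point $z_U=f_U(z_U)$.

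The structural observation is that $z_U=\sum_i p_i(z_U)\,y_i$ is a convex combination in which every index $i$ with $p_i(z_U)>0$ forces $z_U\in x_i+U$, so the contributing centres $x_i$ lie within $U$ of $z_U$ while the corresponding $y_i$ belong to $T(x_i)$. By compactness of $X$, the net $(z_U)_{U\in\mathcal{U}}$ admits a subnet converging to some $x_0\in X$, and along this subnet the contributing $x_i$ converge to $x_0$ as well, since $x_i-z_U\in U\to 0$. To finish I would fix an arbitrary convex open neighbourhood $V$ of the compact set $T(x_0)$; upper hemicontinuity yields a neighbourhood $W$ of $x_0$ with $T(x')\subseteq V$ for every $x'\in W$, so eventually all contributing $x_i$ lie in $W$, whence $y_i\in V$ and, by convexity of $V$, $z_U\in V$. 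Passing to the limit gives $x_0\in\overline{V}$; and since $Y$ is locally convex and $T(x_0)$ is compact and convex, Hahn--Banach separation shows $T(x_0)=\bigcap\overline{V}$ over all such $V$, so that $x_0\in T(x_0)$.

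The main obstacle is precisely this passage to the limit in the infinite-dimensional setting: one must guarantee that convex combinations of selections taken at points near $x_0$ accumulate \emph{inside} $T(x_0)$ rather than merely near it. This is where every hypothesis is consumed: local convexity of $Y$ makes the separation argument run, convexity of the values keeps the approximating combinations inside $V$, and closedness of the values on the compact $X$ ensures that $T(x_0)$ is compact and coincides with the intersection of the closures of its convex open neighbourhoods. By contrast, the finite-dimensional reduction through Brouwer is comparatively routine once the approximate selection $f_U$ has been set up.
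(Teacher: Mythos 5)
Your argument is correct, but there is nothing in the paper to compare it with: Theorem \ref{KFFPT} is stated as a known result, with a pointer to the papers of Glicksberg and Fan, and no proof is given. What you have reconstructed is essentially Glicksberg's classical argument, going directly to Brouwer rather than through the finite-dimensional Kakutani theorem: a partition-of-unity approximation $f_U(x)=\sum_{i}p_i(x)\,y_i$ with $y_i\in T(x_i)$, a Brouwer fixed point $z_U$ in the polytope $\conv\{y_1,\dots,y_n\}\subseteq X$, and a limit argument combining compactness of $X$, upper hemicontinuity of $T$, convexity of the values, and Hahn--Banach separation. The hypotheses are consumed exactly where you say they are, and the overall structure is sound. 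Three points deserve to be made explicit in a full write-up: (i) the subnet of $(z_U)_{U\in\mathcal{U}}$ must be cofinal in $\mathcal{U}$, so that for every $U_0$ one eventually has $U_\alpha\subseteq U_0$; this is what turns the loose statement ``the contributing $x_i$ converge to $x_0$'' (they form a finite set of centres per index, not a single net) into the uniform claim you actually use, namely that eventually all contributing centres lie in the neighbourhood $W$ furnished by upper hemicontinuity; (ii) Brouwer applies to $K_U$ because a finite-dimensional subspace of a Hausdorff topological vector space is linearly homeomorphic to some $\R^m$, so Hausdorffness of $Y$ is used here as well as in the separation step, and the partition of unity exists because the compact Hausdorff space $X$ is normal; (iii) the identity $T(x_0)=\bigcap_V\overline{V}$, the intersection running over convex open $V\supseteq T(x_0)$, is exactly strict separation of a point from a nonempty closed convex set in a Hausdorff locally convex space, which is legitimate since the values of $T$ are closed in the compact set $X$, hence compact. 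Finally, note that the paper itself hints at a different derivation: in the proof of Theorem \ref{A-D-D} it asserts that Theorem \ref{KFFPT} is a consequence of the Fan--Browder theorem (Theorem \ref{Fan-Browder}); that route replaces your approximation scheme by a separation-and-covering argument and is shorter if Fan--Browder is taken for granted, whereas your proof is self-contained down to Brouwer's theorem.
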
  

The following result is an extension to the previous one, see \cite{Lassonde-b} for more details.

\begin{theorem}[Fan-Browder]\label{Fan-Browder}
Let $X$ be a compact, convex and nonempty subset of a topological vector space and $T:X\tos X$ be a correspondence with convex values. If $T$ has open fibres, i.e. $\{x\in X: y\in T(x)\}$ is open for all $y\in X$, then there exists $x_0\in X$ such that $T(x_0)=\emptyset$ or $x_0\in T(x_0)$.
\end{theorem}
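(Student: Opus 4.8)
The plan is to prove this by the classical argument that reduces the infinite-dimensional fixed-point problem to a finite-dimensional one, using compactness together with a partition of unity and then a single-valued fixed-point theorem. First I would dispose of the trivial alternative: if $T(x)=\emptyset$ for some $x\in X$ the conclusion already holds, so I would assume $T(x)\neq\emptyset$ for every $x\in X$ and concentrate on producing a point $x_0$ with $x_0\in T(x_0)$.

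Under this assumption, for each $x\in X$ there is some $y\in T(x)$, which means that $x$ belongs to the fibre $\{x\in X:~y\in T(x)\}$. Since these fibres are open by hypothesis, the family $\big\{\,\{x\in X:~y\in T(x)\}\,\big\}_{y\in X}$ is an open cover of $X$. By compactness I would extract a finite subcover indexed by points $y_1,\dots,y_n\in X$, and then choose a continuous partition of unity $p_1,\dots,p_n$ subordinate to it, arranged so that $p_i(x)>0$ forces $y_i\in T(x)$.

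Next I would define the continuous map $f:X\to X$ by $f(x)=\sum_{i=1}^n p_i(x)\,y_i$; this is well defined because $X$ is convex and each $y_i\in X$. Setting $S=\conv\{y_1,\dots,y_n\}$, the image $f(X)$ lies in $S$, which is a nonempty compact convex (finite-dimensional) subset of $X$, and in particular $f(S)\subseteq S$. Regarding the restriction $f|_S$ as a correspondence with singleton, hence convex, closed and nonempty, values, it is upper hemicontinuous, so Theorem \ref{KFFPT} applied on $S$ yields a point $x_0\in S$ with $x_0=f(x_0)$.

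Finally I would verify that this fixed point lies in its own image. For every index $i$ with $p_i(x_0)>0$ we have $y_i\in T(x_0)$ by construction, so $x_0=f(x_0)=\sum_{i=1}^n p_i(x_0)\,y_i$ is a convex combination of points of $T(x_0)$; since $T(x_0)$ is convex, it follows that $x_0\in T(x_0)$, as required. The main obstacle I anticipate is the passage to finite dimensions: securing a subordinate partition of unity (which rests on the normality of the compact set $X$) and checking that $f$ carries the simplex $S$ into itself, so that the single-valued instance of Theorem \ref{KFFPT} can be invoked.
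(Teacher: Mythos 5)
The paper does not actually prove this theorem: it is quoted as a known result with a pointer to \cite{Lassonde-b}, and is then used as a black box (e.g.\ in the proofs of Theorem \ref{Qiu} and Theorem \ref{A-D-D}). So there is no internal proof to compare against; your proposal supplies the classical partition-of-unity argument, and in substance it is correct and standard: discard the empty-value alternative, cover $X$ by the open fibres $\{x\in X:\,y\in T(x)\}$, extract a finite subcover indexed by $y_1,\dots,y_n$, take a subordinate partition of unity $p_1,\dots,p_n$, and observe that any fixed point $x_0$ of $f(x)=\sum_{i}p_i(x)y_i$ is a convex combination of points $y_i\in T(x_0)$, hence lies in $T(x_0)$ by convexity of the values. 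This is exactly Browder's original proof, and it is the route Lassonde follows as well.

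Two points need attention, both concerning the ambient space, and both at the step you yourself flagged as the main obstacle. First, the statement in the paper assumes only a topological vector space, whereas your argument uses Hausdorffness in at least three places: normality of the compact set $X$ (needed for the partition of unity), closedness of singletons (needed for ``closed values'' in Theorem \ref{KFFPT}), and the identification of the affine hull of $S=\conv\{y_1,\dots,y_n\}$ with a Euclidean space. The classical theorem is stated for Hausdorff spaces, so this is arguably an imprecision of the paper's statement rather than a defect of your proof, but you should say explicitly that you work in (or reduce to) the Hausdorff setting. Second, invoking Theorem \ref{KFFPT} as stated is not quite legitimate: that theorem requires the ambient space to be Hausdorff \emph{and locally convex}, which the space in Theorem \ref{Fan-Browder} need not be; moreover, the paper later asserts (in the proof of Theorem \ref{A-D-D}) that Theorem \ref{KFFPT} is a consequence of Theorem \ref{Fan-Browder}, so quoting it wholesale gives your argument a circular flavour. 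Both issues are repaired simultaneously by the observation you only hinted at: $S$ lies in a finite-dimensional subspace, which in the Hausdorff case is linearly homeomorphic to some $\R^k$, and there the single-valued compact convex instance of Theorem \ref{KFFPT} you need is precisely Brouwer's fixed point theorem --- that is what should be cited. With these repairs the proof is complete.
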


\section{On the Morgan-Scalzo-Berge maximum theorem}\label{Berge}
The Berge maximum theorem can be stated as follows (see \cite{aliprantis06}).
\begin{theorem}\label{t1}
Let $X$, $Y$ be two topological spaces, $S:X\tos Y$  be a  continuous correspondence  with non-empty and compact values, and $f:X\times Y\to\R$  be a continuous function. Then the ``argmax" correspondence $M:X\tos Y$, defined as 
\begin{align}\label{max-corre}
M(x)=\{y\in S(x):f(x,y)=m(x)\}
\end{align}
is upper hemicontinuous and has non-empty compact values. Moreover,  the ``value function" $m:X\to \R$  defined as 
\begin{align}\label{max-fun}
m(x)=\max_{y\in S(x)}f(x,y)
\end{align}
is continuous.
\end{theorem}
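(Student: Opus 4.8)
The plan is to prove the three assertions in order: first that $m$ is well defined with $M(x)$ non-empty and compact, then that $m$ is continuous, and only afterwards that $M$ is upper hemicontinuous. Since continuity of $f$ gives continuity of each section $f(x,\cdot)$, and $S(x)$ is non-empty and compact, the Weierstrass extreme value theorem guarantees that the maximum in \eqref{max-fun} is attained; hence $m(x)$ is a well-defined real number and $M(x)\neq\emptyset$. As $M(x)$ is precisely the set on which the continuous function $f(x,\cdot)$ attains its maximum over $S(x)$, it is a closed subset of the compact set $S(x)$, and is therefore compact.

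For the continuity of $m$ I would argue upper and lower semicontinuity separately. For upper semicontinuity, fix $x_0$ and $\lambda>m(x_0)$; then $f(x_0,y)<\lambda$ for every $y\in S(x_0)$, so continuity of $f$ yields, for each such $y$, a product neighbourhood $U_y\times W_y$ of $(x_0,y)$ on which $f<\lambda$. Compactness of $S(x_0)$ extracts a finite subcover $W_{y_1},\dots,W_{y_n}$; setting $W=\bigcup_i W_{y_i}$ and intersecting the $U_{y_i}$ with the neighbourhood provided by upper hemicontinuity of $S$ (so that $S(x')\subset W$) gives a neighbourhood of $x_0$ on which every value $f(x',y')$ with $y'\in S(x')$ stays below $\lambda$, whence $m(x')<\lambda$. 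For lower semicontinuity, fix $x_0$ and $\lambda<m(x_0)$, choose a maximiser $y_0\in M(x_0)$, and use continuity of $f$ to find $U\times W$ around $(x_0,y_0)$ on which $f>\lambda$; lower hemicontinuity of $S$ then supplies a neighbourhood of $x_0$ on which $S(x')\cap W\neq\emptyset$, and any point of this intersection certifies $m(x')>\lambda$.

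For the upper hemicontinuity of $M$, let $V$ be open with $M(x_0)\subset V$. The key observation is that $S(x_0)\setminus V$ is a compact set disjoint from the argmax set, so $f(x_0,\cdot)$ is strictly below $m(x_0)$ there; by compactness its maximum $\mu$ over $S(x_0)\setminus V$ satisfies $\mu<m(x_0)$, and I pick $\lambda$ with $\mu<\lambda<m(x_0)$. Exactly as above, a finite-subcover argument over $S(x_0)\setminus V$ together with upper hemicontinuity of $S$ produces a neighbourhood on which $S(x')\subset V\cup\bigcup_i W_{y_i}$ and $f(x',y')<\lambda$ whenever $y'\in\bigcup_i W_{y_i}$; meanwhile the already-established continuity of $m$ gives a neighbourhood on which $m(x')>\lambda$. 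Intersecting these, any $y'\in M(x')$ must satisfy $f(x',y')=m(x')>\lambda$, which rules out $y'\in\bigcup_i W_{y_i}$ and forces $y'\in V$; hence $M(x')\subset V$, as required.

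I expect the upper hemicontinuity of $M$ to be the main obstacle. The subtlety is that we work in arbitrary topological spaces, so sequential or closed-graph arguments are unavailable and everything must be phrased through neighbourhoods and open covers; moreover this is the step where all three ingredients---compactness of the values, continuity of $f$, and the continuity of $m$ proved beforehand---must be combined at once, with the threshold $\lambda$ chosen precisely to separate the maximal value $m(x_0)$ from the values of $f(x_0,\cdot)$ on $S(x_0)\setminus V$.
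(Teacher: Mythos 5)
Your proof is correct, but note that the paper itself contains no proof of Theorem \ref{t1}: it is stated as the classical Berge maximum theorem and simply cited from the literature \cite{aliprantis06}, so the comparison is with the standard textbook treatment rather than with an in-paper argument. Your purely neighbourhood-and-open-cover proof is self-contained and works in arbitrary topological spaces, which matches the generality of the statement; textbook proofs are often organized through nets and closed-graph characterizations of hemicontinuity, which need slightly more machinery. Moreover, your key device --- cover $S(x_0)$ (or $S(x_0)\setminus V$) by product neighbourhoods on which $f$ stays below a threshold $\lambda$, extract a finite subcover by compactness, and then shrink the $x$-neighbourhood using upper hemicontinuity of $S$ --- is exactly the technique the paper deploys in its proof of Theorem \ref{ext-Berge}, the pseudo-continuous generalization of this result, so your argument is very much in the spirit of what the paper does where it \emph{does} give proofs. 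The ordering of your proof (values and $m$ first, then continuity of $m$, then upper hemicontinuity of $M$ using lower semicontinuity of $m$) is sound and free of circularity.

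One small point to tidy up: in the upper hemicontinuity step you form the maximum $\mu$ of $f(x_0,\cdot)$ over $S(x_0)\setminus V$, which is undefined when $S(x_0)\subset V$. That degenerate case is trivial --- apply upper hemicontinuity of $S$ to $V$ itself to get a neighbourhood with $S(x')\subset V$, hence $M(x')\subset V$ --- and your cover argument also degenerates gracefully (empty cover, $W=\emptyset$), but it should be mentioned explicitly. This is cosmetic, not a gap.
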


Let $X$ and $Y$ be topological spaces and $T:X\tos Y$ be a set-valued map. A function $f:X\times Y\to\R$ is called  \emph{quasi-transfer upper continuous} \cite{Tian95} on $T$ if, for all  
$(x,y),~(x,z)\in\gra(T)$ with $f(x,y)<f(x,z)$, there exists a neighbourhood $\mathscr{V}_{(x,y)}$ such that for any $(x',y')\in \mathscr{V}_{(x,y)}\cap\gra(T)$ there is $z'\in T(x')$ satisfying 
\[
f(x',y')<f(x',z').
\]

Tian and Zhou \cite{Tian95} noticed that if $f$ is continuous and $T$ is lower hemicontinuous, then $f$ is quasi-transfer upper continuous on $T$. In a similar way, we have the same result with pseudo-continuity instead continuity of $f$. 

\begin{proposition}\label{R1}
Let $X$ and $Y$ be topological spaces, $T:X\tos Y$ be a set-valued map and $f:X\times Y\to\R$ be a function. If $f$ is pseudo-continuous and $T$ is lower hemicontinuous, then $f$ is quasi-transfer upper continuous on $T$.
\end{proposition}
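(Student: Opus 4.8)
The plan is to argue directly rather than through a utility representation: Proposition \ref{pseudo-CC} and Proposition \ref{Rader} would let me write $f=h\circ u$ with $u$ (semi)continuous and reduce to the continuous case treated by Tian and Zhou, but they require $X\times Y$ to be connected, respectively to have a countable base, neither of which is assumed here. So I would fix $(x,y),(x,z)\in\gra(T)$ with $f(x,y)<f(x,z)$ and aim to produce a neighbourhood $\mathscr{V}_{(x,y)}$ of $(x,y)$ and, for each $(x',y')\in\mathscr{V}_{(x,y)}\cap\gra(T)$, a point $z'\in T(x')$ with $f(x',y')<f(x',z')$. The two ingredients are upper pseudo-continuity, to bound $f$ from above near $(x,y)$, and lower pseudo-continuity paired with lower hemicontinuity, to manufacture the comparison point $z'$: inside a neighbourhood of $(x,z)$ supplied by lower pseudo-continuity I would choose a basic box $A\times V$ with $x\in A$ and $z\in V$; since $z\in T(x)\cap V$, lower hemicontinuity gives a neighbourhood $B$ of $x$ with $T(x')\cap V\neq\emptyset$ for $x'\in B$, and any $z'\in T(x')\cap V$ then lies in the box and inherits the lower bound on $f$.

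The one genuine obstacle is that the two thresholds arising most naturally sit on the wrong sides to be combined. Comparing $(x,y)$ with $(x,z)$, upper pseudo-continuity yields $f(x',y')<f(x,z)$ near $(x,y)$; comparing $(x,z)$ with $(x,y)$, lower pseudo-continuity yields $f(x',z')>f(x,y)$; and from $f(x',y')<f(x,z)$ together with $f(x',z')>f(x,y)$ one cannot deduce $f(x',y')<f(x',z')$. This is intrinsic to pseudo-continuity, which only compares $f$ against values it actually attains, so no constant can be interposed unless $f$ takes one.

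I would therefore split according to whether the open interval $\big(f(x,y),f(x,z)\big)$ meets the range $f(X\times Y)$. If it contains a value $v=f(q)$, then upper pseudo-continuity at $(x,y)$ against $q$ gives $f(x',y')<v$ on a neighbourhood $U$, while lower pseudo-continuity at $(x,z)$ against $q$ gives $f>v$ on a box $A\times V\ni(x,z)$; the single interposed value then yields $f(x',y')<v<f(x',z')$. If the interval misses the range, I would run the same two steps with the endpoints themselves as thresholds: $f(x',y')<f(x,z)$ now forces $f(x',y')\leq f(x,y)$ and $f(x',z')>f(x,y)$ forces $f(x',z')\geq f(x,z)$, since any attained value strictly between the endpoints is excluded, whence $f(x',y')\leq f(x,y)<f(x,z)\leq f(x',z')$.

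In both cases the neighbourhood $\mathscr{V}_{(x,y)}=U\cap\big((A\cap B)\times Y\big)$ works: it is a neighbourhood of $(x,y)$ because $x\in A\cap B$, and for $(x',y')$ in it the point $z'\in T(x')\cap V$ furnished above satisfies the required strict inequality. The argument is thus the pseudo-continuous counterpart of the remark of Tian and Zhou for continuous $f$, with the interposed-value-versus-no-intermediate-value dichotomy playing the role that an intermediate constant plays automatically under continuity.
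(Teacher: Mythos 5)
Your proof is correct and follows essentially the same route as the paper's: the same dichotomy (either a value of $f$ can be interposed strictly between $f(x,y)$ and $f(x,z)$, or no such value exists), the same use of upper/lower pseudo-continuity to produce the neighbourhoods, and the same use of lower hemicontinuity to select $z'\in T(x')\cap V$. The only cosmetic difference is that you split on whether the interval meets the full range $f(X\times Y)$, whereas the paper splits on whether it meets the values attained on $\gra(T)$ (restricting the endpoint-pushing argument to graph points accordingly); both versions go through.
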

\begin{proof}
Let $(x,y),~(x,z)\in\gra(T)$ such that $f(x,y)<f(x,z)$. If there exists $(a,b)\in\gra(T)$ such that
\begin{align}\label{desi-1}
f(x,y)<f(a,b)<f(x,z)
\end{align}
then by pseudo-continuity of $f$ there are neighbourhoods $\mathscr{V}_x,\mathscr{V}_y$ and $\mathscr{V}_z$ respectively of $x,y$ and $z$, such that
\begin{align}\label{desi-2}
f(x',y')<f(a,b)<f(x',z')\mbox{ for all } (x',y',z')\in \mathscr{V}_x\times \mathscr{V}_y\times \mathscr{V}_z.
\end{align}
Since $\mathscr{V}_y\cap T(x)\neq\emptyset$ and $\mathscr{V}_z\cap T(x)\neq\emptyset$, we deduce that there is a neighbourhood $\hat{\mathscr{V}}_x$ of $x$ such that
\[
\mathscr{V}_y\cap T(x')\neq\emptyset\mbox{ and }\mathscr{V}_z\cap T(x')\neq\emptyset,\mbox{ for all }x'\in \hat{\mathscr{V}}_x,
\]
due to the lower hemicontinuity of $T$.
We set $\mathscr{U}_x=\mathscr{V}_x\cap\hat{\mathscr{V}}_x$, and we can see that for all $(x',y')\in\mathscr{U}_x\times\mathscr{V}_y$, there exists $z'\in\mathscr{V}_z\cap T(x')$ such that
\eqref{desi-2} holds.

If there is not $(a,b)\in\gra(T)$ such that \eqref{desi-1} holds, then there exist 
neighbourhoods $\mathscr{V}_x,\mathscr{V}_y$ and $\mathscr{V}_z$ respectively of $x,y$ and $z$, such that
\begin{align*}
f(x',y')<f(x,z) \mbox{ and }f(x,y)<f(x',z')\mbox{ for all } (x',y',z')\in \mathscr{V}_x\times \mathscr{V}_y\times \mathscr{V}_z,
\end{align*}
because $f$ is pseudo-continuous. This implies
\begin{align}\label{desi-3}
f(x',y')\leq f(x,y) \mbox{ and }f(x,z)\leq f(x',z'),
\end{align}
for all $(x',y',z')\in \mathscr{V}_x\times \mathscr{V}_y\times \mathscr{V}_z$ such that $(x',y'),(x',z')\in\gra(T)$. Now, following the same steps in the previous case, thanks the lower semi-continuity of $T$,  
there is a neighbourhood $\hat{\mathscr{V}}_x$ of $x$ such that
\[
\mathscr{V}_y\cap T(x')\neq\emptyset\mbox{ and }\mathscr{V}_z\cap T(x')\neq\emptyset,\mbox{ for all }x'\in \hat{\mathscr{V}}_x.
\]
Take $\mathscr{U}_x=\mathscr{V}_x\cap\hat{\mathscr{V}}_x$, thus for all 
$(x',y')\in\mathscr{U}_x\times\mathscr{V}_y\cap\gra(T)$, there exists $z'\in\mathscr{V}_z\cap T(x')$ such that \eqref{desi-3} holds. Hence $f(x',y')<f(x',z')$, and this proves that $f$ is quasi-transfer upper continuous on $T$.
\end{proof}

We state below a generalization of Berge's maximum theorem due to Tian and Zhou \cite{Tian95}. 
\begin{theorem}\label{Tian-Zhou}
Let $X$ and $Y$ be two topological spaces, $T:X\tos Y$ be a non-empty compact-valued closed correspondence and  $f:X\times Y\to\R$ be a function. Then the best response correspondence $M$, defined as in \eqref{max-corre}, is non-empty, compact-valued and closed if, and only if, the function
$f(x,\cdot)$ is transfer upper continuous on $T(x)$, for every $x\in X$; and $f$ is quasi-transfer upper continuous in $(x,y)$ with respect to $T$. If, in addition $T$ is upper hemicontinuous, then so is $M$.
\end{theorem}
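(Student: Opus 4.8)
The plan is to reduce everything to the single description
\[
M(x)=\bigcap_{z\in T(x)}U_{f(x,\cdot)}(z),\qquad U_{f(x,\cdot)}(z)=\{y\in T(x):f(x,y)\geq f(x,z)\},
\]
where level sets and their closures are always taken in the subspace $T(x)$. With this identity in hand the statement splits into three pieces: the two implications of the equivalence and the final upper-hemicontinuity claim, and I would treat them in that order.

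For the ``if'' direction I would first get non-emptiness and compactness of $M(x)$ from transfer upper continuity of $f(x,\cdot)$ on $T(x)$, which by the characterization recalled in Section \ref{pre} says precisely that $\bigcap_{z}U_{f(x,\cdot)}(z)=\bigcap_{z}\overline{U_{f(x,\cdot)}(z)}$. Each $\overline{U_{f(x,\cdot)}(z)}$ is closed in the compact set $T(x)$, and any finite subfamily has non-empty intersection, since it contains a maximizer of $f(x,\cdot)$ over the corresponding finite set of $z$'s; the finite intersection property together with compactness of $T(x)$ then forces $\bigcap_{z}\overline{U_{f(x,\cdot)}(z)}\neq\emptyset$. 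Hence $M(x)=\bigcap_z U_{f(x,\cdot)}(z)$ is non-empty, and as an intersection of closed sets inside the compact $T(x)$ it is compact. To see that $\gra(M)$ is closed I would argue by contradiction: if a net $(x_\alpha,y_\alpha)\in\gra(M)$ converges to $(x,y)$ with $y\notin M(x)$, then $(x,y)\in\gra(T)$ by closedness of $T$, and there is $z\in T(x)$ with $f(x,y)<f(x,z)$; quasi-transfer upper continuity produces, for $\alpha$ large, some $z'\in T(x_\alpha)$ with $f(x_\alpha,y_\alpha)<f(x_\alpha,z')$, contradicting $y_\alpha\in M(x_\alpha)$.

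For the converse, observe that for fixed $x$ the slice map $y\mapsto(x,y)$ is continuous, so $M(x)$ is the preimage of the closed set $\gra(M)$ and is therefore closed in $Y$. Choosing a maximizer $y^{*}\in M(x)$ gives $U_{f(x,\cdot)}(y^{*})=M(x)$, whence $\bigcap_{z}\overline{U_{f(x,\cdot)}(z)}\subseteq\overline{U_{f(x,\cdot)}(y^{*})}=\overline{M(x)}=M(x)=\bigcap_z U_{f(x,\cdot)}(z)$; the reverse inclusion is trivial, so the transfer-upper-continuity characterization holds. Quasi-transfer upper continuity I would obtain by contradiction in the same spirit: a failure yields $(x,y),(x,z)\in\gra(T)$ with $f(x,y)<f(x,z)$ and a net $(x_\alpha,y_\alpha)\to(x,y)$ in $\gra(T)$ along which $y_\alpha$ maximizes $f(x_\alpha,\cdot)$ on $T(x_\alpha)$, i.e. $(x_\alpha,y_\alpha)\in\gra(M)$; closedness of $\gra(M)$ then forces $y\in M(x)$, contradicting $f(x,y)<f(x,z)$.

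Finally, when $T$ is upper hemicontinuous I would deduce upper hemicontinuity of $M$ from the standard closed-graph principle: $M\subseteq T$, the graph $\gra(M)$ is closed, and $T$ is compact-valued and upper hemicontinuous, so any net $y_\alpha\in M(x_\alpha)\subseteq T(x_\alpha)$ with $x_\alpha\to x$ clusters at some $y\in T(x)$ (a compactness-plus-uhc argument), while closedness of $\gra(M)$ pins that cluster point in $M(x)$; this rules out escape of $M(x_\alpha)$ from a prescribed open neighbourhood of $M(x)$. The main obstacle I anticipate is bookkeeping in full generality: since $X$ and $Y$ are arbitrary topological spaces I must phrase every limiting argument with nets rather than sequences and keep all closures relative to $T(x)$. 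The real content, however, lies in matching the abstract transfer/quasi-transfer notions to the argmax set $M$: quasi-transfer upper continuity is exactly the two-variable coupling needed to close $\gra(M)$, and checking that it does, and conversely that a closed graph forces it, is the crux of the argument.
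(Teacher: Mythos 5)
Your proposal is correct, but there is nothing in the paper to compare it against: Theorem \ref{Tian-Zhou} is stated without proof, being imported from Tian and Zhou \cite{Tian95}. Judged on its own merits, your argument is sound and essentially reconstructs the original route. The identity $M(x)=\bigcap_{z\in T(x)}U_{f(x,\cdot)}(z)$, combined with the level-set characterization of transfer upper continuity recalled in Section \ref{pre}, reduces non-emptiness and compactness of $M(x)$ to the finite-intersection-property argument you give (this is in effect Tian--Zhou's generalized Weierstrass theorem); quasi-transfer upper continuity is, as you say, exactly the condition that makes the net-and-contradiction argument for closedness of $\gra(M)$ work, and the converse implications follow by negating it and by evaluating the level sets at a maximizer $y^{*}$, as you do; the final claim is the standard fact that a closed correspondence contained in a compact-valued upper hemicontinuous correspondence is itself upper hemicontinuous. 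Two details you handled correctly that are easy to get wrong deserve emphasis: all closures must be taken relative to $T(x)$, since transfer upper continuity of $f(x,\cdot)$ ``on $T(x)$'' treats $T(x)$ as the ambient space; and in the converse direction closedness of $M(x)$ must be deduced from closedness of $\gra(M)$ (via the slice map $y\mapsto(x,y)$) rather than from compact-valuedness, because compact subsets need not be closed when $Y$ is not Hausdorff.
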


The following result is an extension of Theorem 3.1 in \cite{Morgan2004} and a generalization of Theorem 3.1 in \cite{MORGAN2007}.
\begin{theorem}\label{ext-Berge}
Let $X$ and $Y$ be two topological spaces, $T:X\tos Y$ be a correspondence and $f:X\times Y\to\R$ be a function. If $f$ is pseudo-continuous and $T$ is continuous with non-empty and compact values, then the argmax correspondence $M$, defined as in \eqref{max-corre}, is upper hemicontinuous and the value function $m$, defined as in \eqref{max-fun}, is real-valued and pseudo-continuous. 
\end{theorem}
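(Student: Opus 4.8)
The plan is to verify the three assertions of the statement—that $m$ is real-valued, that $M$ is upper hemicontinuous, and that $m$ is pseudo-continuous—in that order, reserving a direct argument for the last, which carries the genuinely new content. Throughout I write $m(x)=\max_{y\in T(x)}f(x,y)$, with $T$ the continuous compact-valued correspondence of the statement. First I would fix $x\in X$ and show that $f(x,\cdot)$ attains its supremum on the compact fibre $T(x)$. Since joint pseudo-continuity of $f$ restricts to upper pseudo-continuity of $f(x,\cdot)$, every super-level set $\{y\in T(x):f(x,y)\ge\lambda\}$ with $\lambda$ in the range of $f(x,\cdot)$ is closed in $T(x)$. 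Choosing values $\lambda_n$ in that range increasing to $s:=\sup_{y\in T(x)}f(x,y)$ yields a nested family of non-empty closed subsets of the compact set $T(x)$, so by the finite intersection property their intersection is non-empty and any point of it realizes $s$. This shows at once that $m(x)=s$ is finite, that $M(x)\ne\emptyset$, and that $M(x)$, being the super-level set at the attained maximum, is a closed subset of $T(x)$ and hence compact.

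For the upper hemicontinuity of $M$ I would invoke the Tian--Zhou generalization of Berge's theorem (Theorem \ref{Tian-Zhou}). Its hypotheses hold here: $T$ is non-empty compact-valued and closed (closedness being extracted from upper hemicontinuity together with compactness of the values), $f(x,\cdot)$ is transfer upper continuous on $T(x)$ because upper pseudo-continuity implies transfer upper continuity, and $f$ is quasi-transfer upper continuous on $T$ by Proposition \ref{R1}, using the pseudo-continuity of $f$ and the lower hemicontinuity of $T$. Since $T$ is moreover upper hemicontinuous, Theorem \ref{Tian-Zhou} delivers the upper hemicontinuity of $M$.

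The heart of the matter is the pseudo-continuity of $m$, which I would establish directly in two symmetric steps. For upper pseudo-continuity, take $x_1,x_2$ with $m(x_1)<m(x_2)$ and set $\beta=m(x_2)=f(x_2,y_2)$ for some $y_2\in M(x_2)$; the key is that $\beta$ lies in the range of $f$, which is exactly what allows pseudo-continuity to be used. Each $y\in T(x_1)$ satisfies $f(x_1,y)\le m(x_1)<\beta$, so upper pseudo-continuity of $f$ provides a basic product neighbourhood $A_y\times B_y$ of $(x_1,y)$ on which $f<\beta$; covering the compact fibre $T(x_1)$ by finitely many of the $B_y$ and intersecting the corresponding $A_y$, then shrinking by upper hemicontinuity of $T$ so that nearby fibres remain inside the covering open set, produces a neighbourhood of $x_1$ on which $m<\beta$. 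For lower pseudo-continuity one argues dually: given $m(x_1)>m(x_2)$, put $\alpha=m(x_2)$, pick $y_1\in M(x_1)$ with $f(x_1,y_1)>\alpha$, use lower pseudo-continuity of $f$ to obtain a neighbourhood of $(x_1,y_1)$ on which $f>\alpha$, and use lower hemicontinuity of $T$ to guarantee that nearby fibres still meet the chosen neighbourhood of $y_1$, which forces $m(x')\ge f(x',y')>\alpha$ for $x'$ near $x_1$.

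The main obstacle I anticipate is precisely this last step, and it is instructive to see why. Because pseudo-continuity controls $f$ only relative to values that are actually attained, both directions of the argument depend on the comparison level being of the form $f(x_2,y_2)$; one could not run the argument with an arbitrary real threshold, which is the difference from the semicontinuous case. The upper direction is the more delicate of the two, since there one must upgrade pointwise control of $f$ on the fibre to control of the maximum over the whole moving fibre $T(x')$, and this is exactly what forces the compact-cover argument to be combined with the upper hemicontinuity of $T$. A minor technical point to dispatch carefully is the closedness of $T$ required by Theorem \ref{Tian-Zhou}, obtained from upper hemicontinuity and compactness of values.
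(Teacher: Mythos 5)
Your proof is correct and follows essentially the same route as the paper: Proposition \ref{R1} combined with Theorem \ref{Tian-Zhou} for the upper hemicontinuity of $M$, and the same fibre-wise neighbourhood and finite-covering arguments (using lower, respectively upper, hemicontinuity of $T$) for the two halves of the pseudo-continuity of $m$. The only deviations are minor improvements: you prove attainment of the maximum explicitly via the finite intersection property (the paper merely asserts it), and in the upper pseudo-continuity step you dispense with the paper's two-case analysis by always taking the attained value $f(x_2,y_2)$ as the comparison threshold and using attainment of the maximum to get the strict inequality $m(x')<m(x_2)$---which is exactly the paper's second case, and it does work in general.
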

\begin{proof}
From Proposition \ref{R1} and Theorem \ref{Tian-Zhou}, we deduce that the argmax correspondence $M$ is upper hemicontinuous.

Since for each $x$, the set $T(x)$ is non-empty and compact, the function $f(x,\cdot)$ attains it maximum. Thus $m$ is real-valued. Now, we will show that $m$ is pseudo-continuous. First, we will prove that $m$ is lower pseudo-continuous. Let $x_1$ and $x_2$ be two elements of $X$ such that $m(x_1)>m(x_2)$. There exist $y_1\in T(x_1)$ and $y_2\in T(x_2)$ such that
\[
f(x_1,y_1)=m(x_1)>m(x_2)=f(x_2,y_2).
\]
Since $f$ is lower pseudo-continuous, there are neighbourhoods $\mathscr{V}_{x_1}$ and $\mathscr{V}_{y_1}$, respectively of $x_1$ and $y_1$, such that
\[
f(x',y')>f(x_2,y_2),\mbox{ for all }(x',y')\in \mathscr{V}_{x_1}\times\mathscr{V}_{y_1}.
\]
On the other hand, as $\mathscr{V}_{y_1}\cap T(x_1)\neq\emptyset$ there is a neighbourhood $\hat{\mathscr{V}}_{x_1}$ of $x_1$ satisfying
\[
\mathscr{V}_{y_1}\cap T(x')\neq\emptyset,\mbox{ for all }x'\in \hat{\mathscr{V}}_{x_1},
\]
this is a consequence of the lower semi-continuity of $T$. Therefore, we deduce that for all $(x',y')\in(\mathscr{U}_{x_1}\times\mathscr{V}_{y_1})\cap\gra(T)$, where $\mathscr{U}_{x_1}=\hat{\mathscr{V}}_{x_1}\cap \mathscr{V}_{x_1}$, the following holds
\[
m(x')\geq f(x',y')>f(x_2,y_2)=m(x_2).
\]
Finally, we will prove that $m$ is upper pseudo-continuous. Let $x_1$ and $x_2$ be two elements of $X$ such that $m(x_1)<m(x_2)$. There exist $y_1\in T(x_1)$ and $y_2\in T(x_2)$ such that
\[
f(x_1,y_1)=m(x_1)<m(x_2)=f(x_2,y_2).
\]
We distinguish the following two cases.

First, if there exists $(x_0,y_0)\in X\times Y$ such that
$m(x_1)<f(x_0,y_0)<m(x_2)$. We have that $f(x_1,y)<f(x_0,y_0)$, for all $y\in K(x_1)$. Thus, there are open neighbourhoods $\mathscr{V}_y$ and $\mathscr{V}_{x_1}^y$, respectively of $y$ and $x_1$ such that
\begin{align}\label{mar1}
f(x',y')<f(x_0,y_0),\mbox{ for all }(x',y')\in \mathscr{V}_{x_1}^y\times \mathscr{V}_y.
\end{align}
The family of set $\{\mathscr{V}_{y}\}_{y\in K(x_1)}$ is an open covering of $K(x_1)$, which is compact, thus it can be covered by $n$ neighbourhoods $\mathscr{V}_{y_i}$. That means $K(x_1)\subset \bigcup_{i=1}^n\mathscr{V}_{y_i}$. By upper semicontinuity of $T$, there exists open neighbourhood $\mathscr{V}_{x_1}^0$ such that
\begin{align}\label{mar2}
K(x)\subset  \bigcup_{i=1}^n\mathscr{V}_{y_i},\mbox{ for all }x\in \mathscr{V}_{x_1}^0.
\end{align}
For each $x\in \mathscr{V}_{x_1}=\bigcap_{i=0}^n\mathscr{V}_{x_1}^i$ and each $y\in K(x)$, we have from \eqref{mar1} and \eqref{mar2} the following
\[
f(x,y)<f(x_0,y_0).
\]
Consequently, $m(x)\leq f(x_0,y_0)<m(x_2)$.

Second, assume that there is not any $(x_0,y_0)\in X\times Y$ such that $m(x_1)<f(x_0,y_0)<m(x_2)$.
We have that $f(x_1,y)<f(x_2,y_2)$, for all $y\in K(x_1)$. By the same steps given in the previous part, there exists a neighbourhood $\mathscr{V}_{x_1}$ of $x_1$ such that 
\[
f(x,y)<f(x_2,y_2),\mbox{ for all }x\in \mathscr{V}_{x_1}\mbox{ and all }y\in K(x).
\]
Since $f(x,\cdot)$ attains its maximum on $K(x)$, we deduce that $m(x)<m(x_2)$.
\end{proof}

\section{Equivalent results of Ky Fan's minimax inequality}\label{Ky}
The following result is the famous  minimax inequality due to Ky Fan \cite{Kfan}.
\begin{theorem}[Ky Fan]\label{minmax}
Let $X$ be a compact  convex subset of a Hausdorff topological vector space. Let $f$ be a real-valued function defined on $X\times X$ such that
\begin{enumerate}
\item[(i)] for each $y\in X$, $f(\cdot,y)$ is lower semicontinuous;
\item[(ii)] for each $x\in X$, $f(x,\cdot)$ is quasi-concave.
\end{enumerate}
Then, the minimax inequality
\[
\min_{x\in X}\sup_{y\in X}f(x,y)\leq \sup_{x\in X} f(x,x)
\]
holds.
\end{theorem}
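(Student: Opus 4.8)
The plan is to derive the inequality by contradiction, using the Fan--Browder fixed point theorem (Theorem \ref{Fan-Browder}) as the engine. First I would record that the left-hand side is a genuine minimum and not merely an infimum: the map $x\mapsto\sup_{y\in X}f(x,y)$ is a pointwise supremum of the functions $f(\cdot,y)$, each lower semicontinuous by hypothesis (i), hence is itself lower semicontinuous; since $X$ is compact it attains its infimum. Writing $\gamma=\sup_{x\in X}f(x,x)$, the goal becomes $\min_{x\in X}\sup_{y\in X}f(x,y)\le\gamma$. If $\gamma=+\infty$ there is nothing to prove, so I assume $\gamma\in\R$ and suppose, for contradiction, that $\sup_{y\in X}f(x,y)>\gamma$ for every $x\in X$.

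Under this standing assumption, for each $x$ there is some $y$ with $f(x,y)>\gamma$, which motivates introducing the correspondence $T:X\tos X$ defined by $T(x)=\{y\in X:~f(x,y)>\gamma\}$; by construction $T(x)\neq\emptyset$ for every $x\in X$. I would then verify the two structural hypotheses required by Fan--Browder. For the convexity of values I invoke (ii): the strict upper level set $\{y:~f(x,y)>\gamma\}$ equals the nested union $\bigcup_{\lambda>\gamma}U_{f(x,\cdot)}(\lambda)$ of the convex sets supplied by quasi-concavity, and a nested union of convex sets is convex. For the open-fibre condition I invoke (i): the fibre of $T$ at $y$ is $\{x\in X:~f(x,y)>\gamma\}$, which is open precisely because $f(\cdot,y)$ is lower semicontinuous.

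With both hypotheses in place, Theorem \ref{Fan-Browder} produces a point $x_0\in X$ satisfying $T(x_0)=\emptyset$ or $x_0\in T(x_0)$. The first alternative is impossible, since $T$ has non-empty values everywhere, so $x_0\in T(x_0)$, i.e. $f(x_0,x_0)>\gamma$. This contradicts $\gamma=\sup_{x\in X}f(x,x)\ge f(x_0,x_0)$. Hence the standing assumption is untenable, and there exists $x\in X$ with $\sup_{y\in X}f(x,y)\le\gamma$, which yields $\min_{x\in X}\sup_{y\in X}f(x,y)\le\sup_{x\in X}f(x,x)$. I expect the only delicate point to be the preliminary observation that the outer minimum is actually attained, relying on lower semicontinuity of the sup-value function together with compactness of $X$; once the correspondence $T$ is defined, the checks on convex values and open fibres are routine, and Fan--Browder closes the argument.
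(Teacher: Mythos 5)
Your proposal is correct, but note that the paper never actually proves this statement: Theorem \ref{minmax} is quoted as Ky Fan's classical result, and the paper merely remarks afterwards that it is \emph{equivalent} to the Fan--Browder theorem, deferring to Lassonde's notes \cite{Lassonde-b} for that equivalence. What you have written is precisely the Fan--Browder $\Rightarrow$ Ky Fan direction of that equivalence, spelled out in full, so your argument supplies a proof the paper only gestures at. All the steps check out: the value function $x\mapsto\sup_{y\in X}f(x,y)$ is lower semicontinuous as a pointwise supremum of the lower semicontinuous functions $f(\cdot,y)$, hence attains its infimum on the compact set $X$, which legitimizes the $\min$ in the statement; the strict superlevel set $\{y\in X:\ f(x,y)>\gamma\}$ is convex, either by your nested-union argument $\bigcup_{\lambda>\gamma}U_{f(x,\cdot)}(\lambda)$ or directly because strict superlevel sets of quasi-concave functions are convex; the fibres $\{x\in X:\ f(x,y)>\gamma\}$ are open exactly by hypothesis (i); and Theorem \ref{Fan-Browder} then forces a fixed point $x_0\in T(x_0)$, i.e.\ $f(x_0,x_0)>\gamma=\sup_{x\in X}f(x,x)$, a contradiction. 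This is the standard derivation, and it fits the paper's framework well, since the same Fan--Browder theorem is the tool the paper itself uses later (in the proof of Theorem \ref{Qiu}); the converse direction, which the paper also leans on in Proposition \ref{min-min} and Theorem \ref{A-D-D}, is of course not addressed by your argument, but it is not part of the statement you were asked to prove.
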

The Ky Fan minimax inequality is equivalent to the Fan-Browder theorem, we suggest \cite{Lassonde-b} in order to see this equivalence.

\, 

Now, we state a similar result  where we use pseudo-continuity instead lower semicontinuity.
\begin{proposition}\label{minmax-pseudo-cont}
Let $X$ be a compact  convex subset of a Hausdorff topological vector space. Let $f$ be a real-valued function defined on $X\times X$ such that
\begin{enumerate}
\item[(i)] $f$ is pseudo-continuous;
\item[(ii)] for each $x\in X$, $f(x,\cdot)$ is quasi-concave.
\end{enumerate}
Then, the minimax inequality
\[
\min_{x\in X}\max_{y\in X}f(x,y)\leq \max_{x\in X} f(x,x)
\]
holds.
\end{proposition}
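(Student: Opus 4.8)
The plan is to derive the inequality directly from Ky Fan's inequality (Theorem \ref{minmax}) by replacing the pseudo-continuous payoff $f$ with a continuous surrogate supplied by Scalzo's representation. Since $X$ is convex, the product $X\times X$ is convex and hence connected, so Proposition \ref{pseudo-CC} applies to $f$: there exist a continuous function $u:X\times X\to\R$ and an increasing function $h:u(X\times X)\to\R$ with $f=h\circ u$. My strategy is to check that $u$ meets the hypotheses of Theorem \ref{minmax}, apply that theorem to $u$, and then push the resulting inequality forward through the monotone map $h$.

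First I would verify the hypotheses of Ky Fan's theorem for $u$. Continuity of $u$ immediately gives that $u(\cdot,y)$ is lower semicontinuous for each $y$, so condition (i) holds. The substantive point is condition (ii): I must show $u(x,\cdot)$ is quasi-concave for each fixed $x$, which I would do by comparing upper level sets. Fix $x$ and $\mu\in\R$. Because $u$ is continuous and $X$ is connected and compact, the range $u(x,X)$ is a compact interval $[a,b]$; if $\mu<a$ the set $\{y:u(x,y)\ge\mu\}$ equals $X$ and if $\mu>b$ it is empty, both convex. For $\mu\in[a,b]\subseteq u(X\times X)$ the value $h(\mu)$ is defined, and the strict monotonicity of the increasing $h$ yields
\[
\{y:u(x,y)\ge\mu\}=\{y:h(u(x,y))\ge h(\mu)\}=\{y:f(x,y)\ge h(\mu)\},
\]
which is convex by the quasi-concavity of $f(x,\cdot)$. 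Hence $u(x,\cdot)$ is quasi-concave.

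With both hypotheses verified, Theorem \ref{minmax} applied to $u$ gives $\min_{x}\sup_{y}u(x,y)\le\sup_{x}u(x,x)$, and since $u$ is continuous with $X$ compact every supremum is attained, so this reads
\[
\min_{x\in X}\max_{y\in X}u(x,y)\le\max_{x\in X}u(x,x).
\]
It remains to transfer this to $f$. Because $h$ is increasing it commutes with maxima and minima: for each $x$ one has $\max_{y}f(x,y)=h(\max_{y}u(x,y))$, the function $x\mapsto\max_{y}u(x,y)$ is continuous by Theorem \ref{t1} and hence attains its minimum, $\min_{x}h(\max_{y}u(x,y))=h(\min_{x}\max_{y}u(x,y))$, and likewise $\max_{x}f(x,x)=h(\max_{x}u(x,x))$. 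Applying the monotone $h$ to both sides of the displayed inequality for $u$ then yields $\min_{x}\max_{y}f(x,y)\le\max_{x}f(x,x)$, as claimed.

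I expect the quasi-concavity transfer in the second paragraph to be the main obstacle, since it is the only place where the precise nature of Scalzo's factorization is essential (the strict monotonicity of $h$ together with the interval structure of the range of the continuous surrogate $u(x,\cdot)$); the remaining commutation identities are routine consequences of $h$ being monotone and of the compactness of $X$.
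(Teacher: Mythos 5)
Your proof is correct, but it follows a genuinely different route from the paper's own proof of Proposition \ref{minmax-pseudo-cont}. The paper argues directly: by its extended maximum theorem (Theorem \ref{ext-Berge}) the argmax correspondence $M(x)=\{y\in X:\ f(x,y)=\max_{z\in X}f(x,z)\}$ is upper hemicontinuous with nonempty, compact and---by quasi-concavity of $f(x,\cdot)$---convex values, so the Kakutani--Fan--Glicksberg theorem (Theorem \ref{KFFPT}) produces $x_0\in M(x_0)$, i.e.\ $f(x_0,x_0)\geq f(x_0,y)$ for all $y\in X$, which immediately yields the minimax inequality. Your reduction---factor $f=h\circ u$ via Scalzo's representation (Proposition \ref{pseudo-CC}), transfer quasi-concavity to $u$ through the level-set identity, apply Ky Fan's inequality (Theorem \ref{minmax}) to $u$, and push the conclusion back through the monotone $h$---is precisely the argument the paper deploys later, in the first half of the proof of Proposition \ref{min-min}, to show that Theorem \ref{minmax} implies Proposition \ref{minmax-pseudo-cont}; so your proof anticipates one direction of the paper's equivalence result rather than reproducing its fixed-point argument. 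What each buys: the paper's route showcases its new Berge-type theorem and keeps the proposition logically independent of the representation theorem (which is then exploited to prove the equivalence), while yours makes the proposition an outright corollary of Ky Fan's inequality, at the cost of invoking Scalzo's theorem and the connectedness of $X\times X$. Two small remarks: your level-set identity $\{y:\,u(x,y)\geq\mu\}=\{y:\,f(x,y)\geq h(\mu)\}$ requires $h$ to be strictly increasing (order-preserving), not merely nondecreasing, since the backward inclusion fails otherwise---this is indeed what ``increasing'' means in Proposition \ref{pseudo-CC}, and it is also what the paper uses in Proposition \ref{min-min}; and your final commutation of $h$ with $\min$ and $\max$ (via Theorem \ref{t1}) can be shortened as the paper does it: pick $x_0,x_1\in X$ with $u(x_0,y)\leq u(x_1,x_1)$ for all $y\in X$ and apply $h$ pointwise to get $f(x_0,y)\leq f(x_1,x_1)$ for all $y\in X$, which already gives the desired inequality without any continuity of the value function.
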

\begin{proof}
By Theorem \ref{ext-Berge}, the argmax correspondence $M:X\tos X$ defined as
\[
M(x)=\left\lbrace y\in X:~f(x,y)=\max_{z\in X}f(x,z)\right\rbrace
\]
is upper hemicontinuous with compact, convex and nonempty values. Thus, there exists $x_0\in X$ such that $x_0\in M(x_0)$, due to Kakutani's fixed point theorem. That means
$f(x_0,x_0)\geq f(x_0,y)$, for all $y\in X$. Therefore,
\[
\min_{x\in X}\max_{y\in X}f(x,y)\leq \max_{y\in X}f(x_0,y)=f(x_0,x_0)\leq \max_{x\in X} f(x,x).
\]
\end{proof}
As a direct consequence of the previous proposition we recover the following result concerning the existence of fixed points.
\begin{corollary}[Browder]\label{Browder}
Let $X$ be a compact, convex and nonempty subset of $\R^n$ and $h:X\to X$ be a continuous function. Then there exists $x_0\in X$ such that  $x_0= h(x_0)$.
\end{corollary}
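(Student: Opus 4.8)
The plan is to turn the fixed-point problem into a minimax inequality and let Proposition \ref{minmax-pseudo-cont} do the work. Since that proposition already contains Kakutani's theorem internally, the only creative step is to design an auxiliary payoff $f$ on $X\times X$ whose associated minimax inequality is impossible unless $h$ has a fixed point. I would take the squared-distance penalty
\[
f(x,y)=-\|y-h(x)\|^2,
\]
where $\|\cdot\|$ is the Euclidean norm on $\R^n$.

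Next I would check the two hypotheses of Proposition \ref{minmax-pseudo-cont}. Because $h$ is continuous and the norm is continuous, the map $(x,y)\mapsto f(x,y)$ is jointly continuous, hence (being both upper and lower semi-continuous) it is pseudo-continuous, which gives hypothesis (i). For hypothesis (ii), fix $x\in X$; then $y\mapsto\|y-h(x)\|^2$ is a convex quadratic, so $f(x,\cdot)=-\|\cdot-h(x)\|^2$ is concave and therefore quasi-concave on the convex set $X$. Thus Proposition \ref{minmax-pseudo-cont} applies and yields
\[
\min_{x\in X}\max_{y\in X}f(x,y)\leq \max_{x\in X}f(x,x).
\]

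The conclusion then follows by evaluating both sides. For the left-hand side, since $h(x)\in X$, the inner maximization over the feasible $y$ attains its maximal value $0$ precisely at $y=h(x)$, so $\max_{y\in X}f(x,y)=0$ for every $x$ and hence the left-hand side equals $0$. For the right-hand side, $\max_{x\in X}f(x,x)=-\min_{x\in X}\|x-h(x)\|^2\leq 0$, and this maximum is attained by compactness of $X$ and continuity of $x\mapsto\|x-h(x)\|^2$. Combining, $0\leq\max_{x\in X}f(x,x)\leq 0$, which forces $\min_{x\in X}\|x-h(x)\|^2=0$; if $x_0$ realizes this minimum, then $\|x_0-h(x_0)\|^2=0$, i.e. $x_0=h(x_0)$.

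I expect the only real subtlety to be the identity $\max_{y\in X}f(x,y)=0$: it hinges on the feasibility $h(x)\in X$ together with the fact that the unconstrained minimizer of $\|y-h(x)\|^2$ lies inside $X$, so that the value $0$ is actually achieved within the constraint set. Everything else (continuity implying pseudo-continuity, concavity implying quasi-concavity, attainment of the outer maximum) is routine given the hypotheses and the results already established in the excerpt.
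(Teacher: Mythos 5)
Your proof is correct and follows exactly the route the paper intends: the paper states this corollary as a ``direct consequence'' of Proposition \ref{minmax-pseudo-cont} without spelling out the details, and your choice of the penalty payoff $f(x,y)=-\|y-h(x)\|^2$ (continuous, hence pseudo-continuous, and concave in $y$) is a valid instantiation of that derivation, with the feasibility $h(x)\in X$ correctly identified as the point that forces the left-hand side of the minimax inequality to equal $0$.
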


\begin{remark}\label{minmax-pseudo->Fan-Browder}
Since Corollary \ref{Browder} implies  Browder-Fan's theorem, so this last one  is a consequence of Proposition \ref{minmax-pseudo-cont}. 
\end{remark}

At first glance it seems that Theorem \ref{minmax} and Proposition \ref{minmax-pseudo-cont} are independent, because pseudo-continuity does not imply lower semicontinuity, and conversely lower semicontinuity in the second variable does not imply pseudo-continuity in both variables. However, we will show that these are equivalent.
\begin{proposition}\label{min-min}
Theoren \ref{minmax} and Proposition \ref{minmax-pseudo-cont} are equivalent.
\end{proposition}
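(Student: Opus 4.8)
The plan is to establish the two implications separately, using Scalzo's factorization (Proposition \ref{pseudo-CC}) for the direction Theorem \ref{minmax} $\Rightarrow$ Proposition \ref{minmax-pseudo-cont}, and the fixed-point chain already recorded in the paper for the converse.

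First I would prove that Theorem \ref{minmax} implies Proposition \ref{minmax-pseudo-cont}. Given a pseudo-continuous $f:X\times X\to\R$ with $f(x,\cdot)$ quasi-concave for each $x$, I note that $X$ is convex, hence connected, so $X\times X$ is connected as well. Proposition \ref{pseudo-CC} then provides a continuous $u:X\times X\to\R$ and an increasing $h:u(X\times X)\to\R$ with $f=h\circ u$. The map $u$ is (lower semi)continuous in its first variable, and for each fixed $x$ the upper level sets of $u(x,\cdot)$ coincide with those of $f(x,\cdot)$ because $h$ is increasing; hence $u(x,\cdot)$ inherits quasi-concavity. Thus $u$ satisfies the hypotheses of Theorem \ref{minmax}, which yields
\[
\min_{x\in X}\max_{y\in X}u(x,y)\leq \max_{x\in X}u(x,x),
\]
all extrema being attained since $u$ is continuous and $X$, hence $X\times X$, is compact. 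Applying the increasing map $h$ to both sides and using that an increasing function commutes with maxima and minima that are attained, I recover $h\bigl(\max_y u(x,y)\bigr)=\max_y f(x,y)$, $h\bigl(\min_x\max_y u(x,y)\bigr)=\min_x\max_y f(x,y)$ and $h\bigl(\max_x u(x,x)\bigr)=\max_x f(x,x)$, which delivers the desired inequality for $f$.

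For the converse I would invoke the machinery already assembled in the paper: by Remark \ref{minmax-pseudo->Fan-Browder}, Proposition \ref{minmax-pseudo-cont} implies the Fan-Browder theorem (Theorem \ref{Fan-Browder}), and the Fan-Browder theorem is equivalent to Ky Fan's minimax inequality (Theorem \ref{minmax}), as recalled right after the statement of the latter. Chaining these two facts produces Theorem \ref{minmax} from Proposition \ref{minmax-pseudo-cont}, closing the loop and establishing the claimed equivalence.

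The main obstacle is the bookkeeping in the first implication: one must check that the increasing reparametrization $h$ transports \emph{both} the quasi-concavity hypothesis and the min-max value exactly. The quasi-concavity transfer is immediate from the coincidence of level sets, but the value identities require that the relevant extrema actually be attained, which is precisely where compactness of $X$ together with continuity of $u$ enters, so that $h$, although possibly discontinuous, still commutes with these particular $\max$ and $\min$ operations.
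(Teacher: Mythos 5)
Your proposal is correct and follows essentially the same route as the paper: Scalzo's factorization $f=h\circ u$ (Proposition \ref{pseudo-CC}) plus transfer of quasi-concavity through level sets for the direction Theorem \ref{minmax} $\Rightarrow$ Proposition \ref{minmax-pseudo-cont}, and the chain through Fan--Browder (Remark \ref{minmax-pseudo->Fan-Browder} together with the Lassonde equivalence) for the converse. The only cosmetic difference is that the paper handles the first direction by picking witness points $x_0,x_1$ where the extrema of $u$ are attained and applying $h$ pointwise, whereas you phrase it as $h$ commuting with attained maxima and minima; these are the same argument.
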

\begin{proof}
First, we will prove that Theorem \ref{minmax} implies Proposition \ref{minmax-pseudo-cont}. Since  $X\times X$ is a convex set, in particular it is connected. By Proposition \ref{pseudo-CC}, there exists a continuous function $u:X\times X\to\R$ and an  increasing function $h:u(X\times X)\to\R$ such that $f=h\circ u$. Moreover, for each $x$ we have $U_{f(x,\cdot)}(y)=U_{u(x,\cdot)}(y)$, for all $y\in X$. In other words, $u(x,\cdot)$ is quasi-concave, for every $x\in X$. Thus, by Theorem \ref{minmax} we have
\[
\min_{x\in X}\max_{y\in X}u(x,y)\leq \max_{x\in X} u(x,x).
\]
Since $u$ is continuous, there exist $x_0$ and $x_1$ both in $X$ such that
\[
u(x_0,y)\leq u(x_1,x_1),
\] 
for all $y\in X$. Thus, $f(x_0,y)=h(u(x_0,y))\leq h(u(x_1,x_1))\leq f(x_1,x_1)$ and consequently
\[
\min_{x\in X}\max_{y\in X}f(x,y)\leq \max_{x\in X} f(x,x).
\]
Conversely, since Proposition \ref{minmax-pseudo-cont} implies Theorem \ref{Fan-Browder} and this is equivalent to Theorem \ref{minmax}, the result follows. 

\end{proof}

Another consequence of Ky Fan's minimax inequality is stated below.

\begin{theorem}\label{equi-Ky}
Let $X$ be a compact  convex subset of a Hausdorff topological vector space. Let $f$ and $g$ be two real-valued function defined on $X\times X$ such that
\begin{enumerate}
\item[(i)] for all $x,y\in X$, $f(x,y)\leq g(x,y)$;
\item[(ii)] for each $y\in X$, the function $f(\cdot,y)$ is lower semicontinuous;
\item[(iii)] for each $x\in X$, the function $g(x,\cdot)$ is quasi-concave;
\item[(iv)] for all $x\in X$, $g(x,x)\leq 0$.
\end{enumerate}
Then, there exists $x_0\in X$ such that $f(x_0,y)\leq 0$, for all $y\in X$.
\end{theorem}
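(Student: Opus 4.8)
The plan is to deduce the statement from Ky Fan's inequality (Theorem \ref{minmax}) by replacing $f$ with an auxiliary function that is quasi-concave in the second variable while still being lower semicontinuous in the first. The natural candidate is the fibrewise quasi-concave regularization: for each fixed $x\in X$, let $F(x,\cdot)$ be the quasi-concave regularization of $f(x,\cdot)$, i.e.
\[
F(x,y)=\sup\{\lambda\in\R:~y\in\conv(U_{f(x,\cdot)}(\lambda))\}.
\]
Since $f(x,\cdot)\le g(x,\cdot)$ and $g(x,\cdot)$ is quasi-concave by (iii), Lemma \ref{reg-quasi-concave} applied for each fixed $x$ guarantees that $F(x,\cdot)$ is real-valued and that $f(x,y)\le F(x,y)\le g(x,y)$ for all $x,y\in X$. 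By construction $F(x,\cdot)$ is quasi-concave for every $x$, and hypothesis (iv) gives $F(x,x)\le g(x,x)\le 0$ for all $x$.

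With these properties in hand the conclusion would follow quickly. Applying Theorem \ref{minmax} to $F$ yields
\[
\min_{x\in X}\sup_{y\in X}F(x,y)\le \sup_{x\in X}F(x,x)\le 0,
\]
so the minimum is attained at some $x_0\in X$ with $\sup_{y\in X}F(x_0,y)\le 0$; then $f(x_0,y)\le F(x_0,y)\le 0$ for every $y\in X$, which is exactly the desired assertion. The only hypothesis of Theorem \ref{minmax} that is not immediate for $F$ is that $F(\cdot,y)$ be lower semicontinuous for each fixed $y$, and this is where I expect the real work to lie.

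To settle this lower semicontinuity I would fix $y\in X$ and $\lambda\in\R$ and show that $\{x\in X:F(x,y)>\lambda\}$ is open. If $F(x_0,y)>\lambda$, then by definition of the supremum there is $\mu>\lambda$ with $y\in\conv(U_{f(x_0,\cdot)}(\mu))$; hence $y=\sum_{i=1}^n t_i z_i$ is a finite convex combination with $f(x_0,z_i)\ge\mu>\lambda$ for each $i$. By hypothesis (ii) each strict super-level set $O_i=\{x\in X:f(x,z_i)>\lambda\}$ is open and contains $x_0$, so $N=\bigcap_{i=1}^n O_i$ is an open neighbourhood of $x_0$; for any $x\in N$, putting $\mu_x=\min_{1\le i\le n}f(x,z_i)>\lambda$ gives $z_i\in U_{f(x,\cdot)}(\mu_x)$ for all $i$, whence $y\in\conv(U_{f(x,\cdot)}(\mu_x))$ and therefore $F(x,y)\ge\mu_x>\lambda$. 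Thus $N\subset\{x:F(x,y)>\lambda\}$, proving openness and hence lower semicontinuity of $F(\cdot,y)$. The main obstacle is exactly this step — verifying that the quasi-concave regularization preserves lower semicontinuity in the first variable — and the two facts that make it work are that membership in a convex hull uses only finitely many points, so only a finite intersection of the open sets $O_i$ is needed, and that lower semicontinuity furnishes precisely these open strict super-level sets.
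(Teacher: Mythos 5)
Your proposal is correct and follows essentially the same route as the paper: form the fibrewise quasi-concave regularization of $f$, use Lemma \ref{reg-quasi-concave} together with hypothesis (iii) to get a real-valued $F$ with $f\le F\le g$, and apply Theorem \ref{minmax} to $F$. The only difference is that the paper obtains the lower semicontinuity of the regularization in the first variable by citing Proposition 3.10 of \cite{JCYG}, whereas you prove that step directly (finitely many points in the convex hull, finite intersection of the open strict super-level sets given by (ii)), and your argument there is valid.
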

\begin{proof}
For each $x$, we denote  by $f_q(x,\cdot)$ the quasi-concave regularization of $f(x,\cdot)$.
Since $g$ is quasi-concave in its second argument, $f_q(x,\cdot)$  is real-valued, due to Lemma \ref{reg-quasi-concave}. Moreover, $f(x,y)\leq f_q(x,y)\leq g(x,y)$, for all $x,y\in X$. By Proposition 3.10 in \cite{JCYG}, $f_q$ is lower semicontinuous in its first argument. Thus, there exists $x_0\in X$ such that 
$f_q(x_0,y)\leq0$, for all $y\in X$, due to Theorem \ref{minmax}. Therefore, the result follows.
\end{proof}

\begin{remark}
The previous result is actually equivalent to Theorem \ref{minmax}. For that, it is enough to show that Theorem \ref{minmax} is a consequence of Theorem \ref{equi-Ky}. In that sense, we denote $\alpha= \sup_{x\in X}f(x,x)$. If $\alpha=+\infty$ there is nothing to prove. Otherwise, we define the function $h:X\times X\to\R$ by $h(x,y)=f(x,y)-\alpha$, which satisfies all assumptions of Theorem \ref{equi-Ky}.
\end{remark}

Other similar result to the previous one was considered by Qiu and Peng \cite{Qiu-Peng}, where they used lower pseudo-continuity instead of lower semicontinuity, but they need both functions to vanish on the diagonal.  We will give a simple proof  in order to show that it is a consequence of  Fan-Browder's theorem which is equivalent to Theorem \ref{minmax}.
\begin{theorem}[Qiu and Peng]\label{Qiu}
Let $X$ be a compact  convex subset of a Hausdorff topological vector space. Let $f$ and $g$ be two real-valued function defined on $X\times X$ such that
\begin{enumerate}
\item[(i)] for all $x,y\in X$, $f(x,y)\leq g(x,y)$;
\item[(ii)] for each $y\in X$, the function $f(\cdot,y)$ is lower pseudo-continuous;
\item[(iii)] for each $x\in X$, the function $g(x,\cdot)$ is quasi-concave;
\item[(iv)] for all $x\in X$, $f(x,x)=g(x,x)=0$.
\end{enumerate}
Then, there exists $x_0\in X$ such that $f(x_0,y)\leq 0$, for all $y\in X$.
\end{theorem}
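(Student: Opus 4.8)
The plan is to argue by contradiction and to reduce the statement to the Fan--Browder theorem (Theorem \ref{Fan-Browder}). Suppose the conclusion fails, so that for every $x\in X$ there is some $y\in X$ with $f(x,y)>0$. I would consider the correspondence $T:X\tos X$ defined by
\[
T(x)=\{y\in X:~f(x,y)>0\}.
\]
By the contradiction hypothesis, $T(x)\neq\emptyset$ for every $x\in X$. The first thing I would check is that $T$ has open fibres. Fix $y\in X$; since $f(y,y)=0$ by assumption (iv), the fibre $\{x\in X:~y\in T(x)\}$ is exactly $\{x\in X:~f(x,y)>f(y,y)\}$. The lower pseudo-continuity of $f(\cdot,y)$ then provides, for each $x$ in this set, a neighbourhood on which $f(\cdot,y)$ stays strictly above the value $f(y,y)=0$; this uses the point $y$ itself as the reference point in the definition of lower pseudo-continuity, which is precisely where the diagonal condition $f(x,x)=0$ enters. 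Hence each fibre is open.

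The obstruction is that $T$ need not have convex values, since only $g(x,\cdot)$ and not $f(x,\cdot)$ is assumed quasi-concave. To get around this I would pass to the convexified correspondence $S(x)=\conv(T(x))$. Two observations make this work. First, $S$ still has open fibres: if $y\in\conv(T(x_0))$, write $y$ as a convex combination of finitely many points $y_1,\dots,y_k\in T(x_0)$; then the finite intersection $\bigcap_{i=1}^k\{x\in X:~y_i\in T(x)\}$ is an open neighbourhood of $x_0$ on which $y$ remains a convex combination of points of $T(x)$, hence lies in the fibre of $S$ over $y$. Second, since $f\leq g$ we have $T(x)\subseteq\{y\in X:~g(x,y)>0\}$, and the latter set is convex because $g(x,\cdot)$ is quasi-concave; therefore $S(x)=\conv(T(x))\subseteq\{y\in X:~g(x,y)>0\}$ as well.

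With $S$ in hand I would invoke Theorem \ref{Fan-Browder}: the correspondence $S$ has convex values and open fibres, so either $S(x_0)=\emptyset$ for some $x_0$, or $x_0\in S(x_0)$ for some $x_0$. The first alternative is impossible because $S(x)\supseteq T(x)\neq\emptyset$ for every $x$. The second alternative is also impossible, since $x_0\in S(x_0)\subseteq\{y\in X:~g(x_0,y)>0\}$ would force $g(x_0,x_0)>0$, contradicting $g(x,x)=0$ in assumption (iv). This contradiction shows that the contradiction hypothesis is untenable, i.e.\ there exists $x_0\in X$ with $f(x_0,y)\leq 0$ for all $y\in X$, as desired.

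The only genuinely delicate point is the separation of roles between the two hypotheses: quasi-concavity of $g$ supplies convexity of the values after convexification, while lower pseudo-continuity of $f$ together with the diagonal condition $f(x,x)=0$ supplies the open fibres. Convexifying $T$ is exactly the device that lets both be used at once, and verifying that convexification preserves open fibres is the step I would write out most carefully.
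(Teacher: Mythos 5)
Your proof is correct and follows essentially the same route as the paper: the paper defines the same correspondences $F(x)=\{y\in X:\ f(x,y)>0\}$ and $G(x)=\{y\in X:\ g(x,y)>0\}$, gets open fibres of $F$ from lower pseudo-continuity plus the diagonal condition, convexifies $F$ (citing Lemma 5.1 of Yannelis--Prabhakar \cite{YP83}, the very fact you prove inline with the convex-combination argument), notes $\conv(F)(x)\subset G(x)$, and applies Theorem \ref{Fan-Browder}. The only cosmetic difference is that you frame the argument as a contradiction, whereas the paper reads off the Fan--Browder dichotomy directly: the fixed-point alternative is excluded by $g(x,x)=0$, so some $\conv(F(x_0))=\emptyset$, hence $F(x_0)=\emptyset$.
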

\begin{proof}
We consider two correspondences $F,G:X\tos X$ defined by
\[
F(x)=\{y\in X:~f(x,y)>0\}\mbox{ and }G(x)=\{y\in X:~g(x,y)>0\}
\]
Clearly $G$ has convex values and  does not have fixed points. Moreover, $F(x)\subset G(x)$, for all $x\in X$. Also, $F$ has open fibres, due to $f$ being lower pseudo-continuous in its first argument and the fact that $f$ vanishes on the diagonal of $X\times X$. By Lemma 5.1 in \cite{YP83} the correspondence $\conv(F):X\tos X$ defined as $\conv(F)(x)=\conv(F(x))$ has open fibres. Furthermore, $\conv(F)(x)\subset G(x)$, for all $x\in X$. Hence, by Fan-Browder's theorem there exists $x_0\in X$ such that $\conv(F(x_0))=\emptyset$. Consequently, $F(x_0)=\emptyset$ and this means
$f(x_0,y)\leq0$, for all $y\in X$. 
\end{proof}

\begin{remark}
Theorem \ref{Qiu} is  equivalent to Theorem \ref{minmax}. In order to see that, it is enough to show that Theorem \ref{Qiu} implies Theorem \ref{minmax-pseudo-cont}. Indeed, if $f$ is pseudo-continuous then there exists a continuous function $u$ and an  increasing function $h$ such that $f=h\circ u$, due to Proposition \ref{pseudo-CC}. Moreover, $u$ is quasi-concave in its second argument. Now, we consider the function $u_0$ defined by $u_0(x,y)=u(x,y)-u(x,x)$, which is continuous and it vanishes on the diagonal. We now apply Theorem \ref{Qiu} to $u_0$ and recover Theorem \ref{minmax}.
\end{remark}

\section{Equivalence results in Nash games}\label{g-Nash}
A \emph{Nash game}, \cite[Nash 1951]{Nash}, consists of $p$ players, each player $i$ controls the decision variable $x_i\in C_i$ where $C_i$ is a subset of a  Hausdorff locally convex topological vector space $E_i$.  
 The ``total strategy vector'' is $x$
 which will be often denoted by
 \[
  x=(x_1,x_2,\dots,x_i,\dots,x_p).
 \]
Sometimes we write $(x_i,x_{-i})$ instead of $x$ in order to emphasize the $i$-th player's variables within $x$, where $x_{-i}$ is the strategy vector of the other players.
 Player $i$ has a payoff function $\theta_i:C\to\R$ that depends on all player's strategies, where $C=\prod_{i=1}^pC_i$.
 Given the strategies $x_{-i}$ of the other players, the aim of player $i$ is to choose a strategy $x_i$ solving the problem $P_i(x_{-i})$:
\begin{align*}
\max_{ x_i }\theta_i(x_i,x_{-i}) ~\mbox{ subject to }~x_i\in C_i.
\end{align*}
 A vector $\hat{x}\in C$ is a \emph{Nash equilibrium} if for all $i$, $\hat{x}_i$ solves $P_i(x_{-i})$.

\,

Thanks to Debreu \cite{Debreu}, Glicksberg \cite{Glicksberg} and Fan \cite{Fan}, we have  the following existence result of Nash equilibria.
\begin{theorem}\label{D}
For each $i$, $C_i$ is compact, convex and non-empty. If for all $i$, the payoff function $\theta_i$ is continuous and quasi-concave in $x_i$, then there exists at least one Nash equilibrium.
\end{theorem}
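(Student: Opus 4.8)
The plan is to realize Nash equilibria as fixed points of the combined best-response correspondence and then invoke the Kakutani-Fan-Glicksberg theorem (Theorem \ref{KFFPT}). Set $C=\prod_{i=1}^p C_i$, which is a non-empty, compact, convex subset of the Hausdorff locally convex space $E=\prod_{i=1}^p E_i$. For each player $i$ I would define the best-response correspondence $B_i:C\tos C_i$ by
\[
B_i(x)=\argmax_{x_i'\in C_i}\theta_i(x_i',x_{-i}),
\]
and then the combined correspondence $B:C\tos C$ by $B(x)=\prod_{i=1}^p B_i(x)$. The key observation is that $\hat x$ is a fixed point of $B$ if and only if, for every $i$, $\hat x_i\in B_i(\hat x)$, i.e. $\hat x_i$ solves $P_i(\hat x_{-i})$; this is exactly the definition of a Nash equilibrium.

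First I would check that each $B_i$ has non-empty, compact, convex values and is upper hemicontinuous. Fixing $x_{-i}$, the map $x_i'\mapsto\theta_i(x_i',x_{-i})$ is continuous on the compact set $C_i$, so it attains its maximum and $B_i(x)\neq\emptyset$; continuity also makes $B_i(x)$ closed, hence compact. Convexity of $B_i(x)$ follows from the quasi-concavity of $\theta_i$ in its $i$-th argument, since the argmax set is the super-level set $U_{\theta_i(\cdot,x_{-i})}(m_i)$ at the maximal value $m_i=\max_{x_i'\in C_i}\theta_i(x_i',x_{-i})$, which is convex. For upper hemicontinuity I would apply the Berge maximum theorem (Theorem \ref{t1}) with the constant feasibility correspondence $x\mapsto C_i$, which is trivially continuous with non-empty compact values, and with the continuous payoff $\theta_i$; this yields that $B_i$ is upper hemicontinuous (the dependence of $B_i$ on $x$ being only through $x_{-i}$ is harmless).

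Next I would pass from the individual correspondences to the product $B$. Since each $B_i$ is upper hemicontinuous with compact values, the product correspondence $B=\prod_{i=1}^p B_i$ is again upper hemicontinuous, and its values are products of non-empty, compact, convex sets, hence non-empty, compact, and convex. At this point all hypotheses of Theorem \ref{KFFPT} are met, so $B$ admits a fixed point $\hat x\in B(\hat x)$, and by the equivalence noted above this $\hat x$ is a Nash equilibrium.

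The value-theoretic checks (non-emptiness, convexity, closedness) for each $B_i$ are immediate from continuity and quasi-concavity and constitute the routine part. The step requiring the most care is the upper hemicontinuity of the product correspondence: I expect the main, though standard, obstacle to be justifying that upper hemicontinuity is preserved under finite products of compact-valued correspondences, which relies on the compactness of the values to pass cleanly between the open-set and net-based characterizations. With Berge's theorem supplying upper hemicontinuity of each factor, this is the one place where compactness of the $C_i$ is genuinely used beyond attainment of the individual maxima.
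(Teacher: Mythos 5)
Your proposal is correct. Note that the paper itself gives no proof of Theorem \ref{D} --- it is stated as a classical result with citations to Debreu, Glicksberg and Fan --- and your argument is precisely the standard proof from those references: best-response correspondences shown to be non-empty, compact- and convex-valued via Weierstrass and quasi-concavity, upper hemicontinuous via Berge's maximum theorem (Theorem \ref{t1} with the constant feasibility map), and then the Kakutani--Fan--Glicksberg theorem (Theorem \ref{KFFPT}) applied to their product. You also correctly flag the one genuinely delicate step, namely that upper hemicontinuity of the finite product correspondence relies on compactness of the values (a tube-lemma argument), so the proof is complete as written.
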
 

We now present an existence result of Nash equilibria for discontinuous games, due to Morgan and Scalzo \cite{MORGAN2007}.
\begin{theorem}\label{MS-2007}
For each $i$, $C_i$ is compact, convex and non-empty. If for all $i$, the payoff function $\theta_i$ is pseudo-continuous and quasi-concave in $x_i$, then there exists at least one Nash equilibrium.
\end{theorem}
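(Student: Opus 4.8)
The plan is to deduce this discontinuous-game result from the classical continuous result, Theorem \ref{D}, by absorbing the discontinuities into a harmless monotone reparametrization of each payoff. The tool that makes this possible is Scalzo's representation, Proposition \ref{pseudo-CC}. First I would observe that the joint strategy space $C=\prod_{i=1}^p C_i$ is convex, being a product of convex sets, and hence connected. This is precisely the hypothesis needed to apply Proposition \ref{pseudo-CC} to each payoff: since every $\theta_i:C\to\R$ is pseudo-continuous, I obtain for each player $i$ a continuous function $u_i:C\to\R$ and a (strictly) increasing function $h_i:u_i(C)\to\R$ with $\theta_i=h_i\circ u_i$.

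Next I would transfer quasi-concavity from $\theta_i$ to its continuous factor $u_i$. Fixing $i$ and $x_{-i}$, I compare the upper level sets of $\theta_i(\cdot,x_{-i})$ and $u_i(\cdot,x_{-i})$ on $C_i$. Because $h_i$ is increasing, the inequality $u_i(x_i,x_{-i})\ge u_i(x_i',x_{-i})$ is equivalent to $\theta_i(x_i,x_{-i})\ge\theta_i(x_i',x_{-i})$, so these two functions have identical upper level sets of the form $\{x_i : \theta_i(x_i,x_{-i})\ge \theta_i(x_i^0,x_{-i})\}$. Since $u_i$ is real-valued and continuous, every attained level has this form, while levels above or below the range give the empty set or all of $C_i$; in every case the set is convex because $\theta_i$ is quasi-concave in $x_i$. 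Hence $u_i(\cdot,x_{-i})$ is quasi-concave as well, and the auxiliary game with payoffs $u_1,\dots,u_p$ satisfies every hypothesis of Theorem \ref{D}: each $C_i$ is compact, convex and non-empty, and each $u_i$ is continuous and quasi-concave in $x_i$. Theorem \ref{D} then furnishes a Nash equilibrium $\hat{x}$ of the $u$-game.

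Finally I would check that $\hat{x}$ is already a Nash equilibrium of the original game. By the defining property of $\hat{x}$, one has $u_i(\hat{x}_i,\hat{x}_{-i})\ge u_i(x_i,\hat{x}_{-i})$ for every $x_i\in C_i$; applying the increasing map $h_i$ to both sides yields $\theta_i(\hat{x}_i,\hat{x}_{-i})\ge\theta_i(x_i,\hat{x}_{-i})$, so $\hat{x}_i$ solves $P_i(\hat{x}_{-i})$ for each $i$, which is exactly the assertion to be proved.

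The only delicate point is the transfer of quasi-concavity in the second step, and I expect it to be the main (though mild) obstacle. It is essential that $h_i$ be (strictly) increasing, so that the upper level sets of $u_i(\cdot,x_{-i})$ and $\theta_i(\cdot,x_{-i})$ coincide in both directions; with a merely non-decreasing $h_i$ the reverse inclusion could fail and convexity of the level sets of $u_i$ need not be inherited. Once this reparametrization is in place, everything else reduces to composing with a monotone function, which preserves both the quasi-concave structure in $x_i$ and the argmax that encodes the Nash property, so no further continuity or fixed-point machinery beyond Theorem \ref{D} is required.
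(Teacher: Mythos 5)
Your proposal is correct and takes essentially the same route as the paper: the paper obtains Theorem \ref{MS-2007} from the classical continuous result by exactly this argument --- Scalzo's decomposition (Proposition \ref{pseudo-CC}) on the connected set $C$, transfer of quasi-concavity in $x_i$ to the continuous factor $u_i$ via the (strictly) increasing $h_i$, application of the continuous existence theorem, and pull-back of the equilibrium through $h_i$. The only difference is that the paper carries this out once in the generalized-Nash setting (deducing Theorem \ref{MS-2004} from Theorem \ref{A-D}) and then specializes to constant constraint maps, whereas you apply the same argument directly with Theorem \ref{D}.
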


In a generalized Nash game,  each player's strategy must belong to a set identified by the correspondence $K_i: C^{-i}\tos C_i$ in the sense that the strategy space of player $i$ is $K_i(x_{-i})$, which depends on the rival player's strategies $x_{-i}$.
 Given the strategy $x_{-i}$,  player $i$ chooses a strategy $x_i$ such that it solves the following problem
\begin{align}\label{GNEP}
\max_{x_i}\theta_i(x_i,x_{-i})~\mbox{ subject to }~x_i\in K_i(x_{-i}).\tag{$GP_i(x_{-i})$}
\end{align}
Thus, a \emph{generalized Nash equilibrium} (GNEP)  is a vector $\hat{x}\in C$ such that
 the strategy $\hat{x}_i$ is a solution of the problem \eqref{GNEP} associated to $\hat{x}_{-i}$, for any $i$.
 
\,
 
The following result is due to Arrow and Debreu \cite{Arrow-Debreu}, but  we state it 
as in \cite{Facchinei2007}.
\begin{theorem}\label{A-D}
For each $i$, $C_i$ is compact, convex and non-empty. If for all $i$, the following hold:
\begin{enumerate}
\item the payoff function $\theta_i$ is continuous and quasi-concave in $x_i$,
\item the correspondence $K_i$ is lower and upper hemicontinuous  with convex, closed and non-empty values;
\end{enumerate} 
then, there exists at least one generalized Nash equilibrium.
\end{theorem}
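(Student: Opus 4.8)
The plan is to reduce the existence of a generalized Nash equilibrium to a fixed point of an aggregate best-response correspondence, combining the Berge maximum theorem (Theorem \ref{t1}) with the Kakutani-Fan-Glicksberg theorem (Theorem \ref{KFFPT}). For each player $i$, I would first introduce the individual best-response correspondence $M_i:C^{-i}\tos C_i$ defined by
\[
M_i(x_{-i})=\Big\{x_i\in K_i(x_{-i}):~\theta_i(x_i,x_{-i})=\max_{z_i\in K_i(x_{-i})}\theta_i(z_i,x_{-i})\Big\},
\]
and the goal is to show that a fixed point of the product correspondence $M(x)=\prod_{i=1}^p M_i(x_{-i})$ is exactly a vector solving every problem \eqref{GNEP} simultaneously, hence a generalized Nash equilibrium.

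The first step is to analyse each $M_i$ via the classical Berge theorem. Viewing $C^{-i}$ and $C_i$ as topological spaces and setting $g_i(x_{-i},x_i):=\theta_i(x_i,x_{-i})$, the function $g_i$ is continuous and $K_i$ is a continuous correspondence whose values are non-empty and compact (they are closed subsets of the compact set $C_i$). Theorem \ref{t1} then yields that each $M_i$ is upper hemicontinuous with non-empty compact values. To feed this into a fixed point theorem I also need convexity: for fixed $x_{-i}$ the map $\theta_i(\cdot,x_{-i})$ is quasi-concave and $K_i(x_{-i})$ is convex, so its set of maximisers $M_i(x_{-i})$ is convex. Hence each $M_i$ has non-empty, compact and convex values.

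Next I would assemble these into the aggregate map $M:C\tos C$. Composing $M_i$ with the continuous projection $\pi_{-i}:C\to C^{-i}$ preserves upper hemicontinuity, and a finite product of upper hemicontinuous correspondences with compact values is again upper hemicontinuous; moreover its values $\prod_{i=1}^p M_i(x_{-i})$ are non-empty, convex, and compact, hence closed in the Hausdorff space $\prod_{i=1}^p E_i$. Since $C=\prod_{i=1}^p C_i$ is a non-empty, convex and compact subset of the Hausdorff locally convex space $\prod_{i=1}^p E_i$, Theorem \ref{KFFPT} provides $\hat{x}\in C$ with $\hat{x}\in M(\hat{x})$, that is, $\hat{x}_i\in M_i(\hat{x}_{-i})$ for every $i$. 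By the definition of $M_i$ this says precisely that $\hat{x}_i$ solves \eqref{GNEP} associated to $\hat{x}_{-i}$, so $\hat{x}$ is a generalized Nash equilibrium.

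I expect the only delicate point to be the passage from the individual correspondences to the product map: one must confirm that composition with the projection preserves upper hemicontinuity and that a finite product of compact-valued upper hemicontinuous correspondences remains upper hemicontinuous with compact values. These are standard facts, the compactness of the values being exactly what makes upper hemicontinuity well behaved under products, but they are the steps where the hypotheses have to be used with care; the invocations of Berge and of Kakutani-Fan-Glicksberg themselves are then immediate.
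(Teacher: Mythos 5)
Your proposal is correct and takes the same route the paper itself points to: the paper does not spell out a proof of Theorem \ref{A-D} (it is cited from Arrow--Debreu, stated as in Facchinei--Kanzow), but in the proof of Theorem \ref{A-D-D} it remarks that Theorem \ref{A-D} is a consequence of Theorem \ref{KFFPT}, which is precisely your best-response argument combining Theorem \ref{t1} with Theorem \ref{KFFPT}. The points you flag as delicate (compactness of the values of $K_i$ as closed subsets of the compact $C_i$, convexity of the maximiser sets via quasi-concavity, and upper hemicontinuity of the finite product of compact-valued maps) are all handled correctly.
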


Below, an existence result of generalized Nash equilibria for discontinuous generalized Nash games, due to Morgan and Scalzo \cite{Morgan2004}.

\begin{theorem}\label{MS-2004}
For each $i$, $C_i$ is compact, convex and non-empty. If for all $i$, the following hold:
\begin{enumerate}
\item the payoff function $\theta_i$ is pseudo-continuous and quasi-concave in $x_i$,
\item the correspondence $K_i$ is lower and upper hemicontinuous  with convex, closed and non-empty values;
\end{enumerate} 
then, there exists at least a generalized Nash equilibrium.
\end{theorem}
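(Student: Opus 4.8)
The plan is to reduce the theorem to the classical Arrow--Debreu result, Theorem \ref{A-D}, by replacing each discontinuous payoff $\theta_i$ with a continuous one sharing the same best-response structure. The engine for this reduction is Scalzo's representation, Proposition \ref{pseudo-CC}: since $C=\prod_{i=1}^pC_i$ is convex and hence connected, and each $\theta_i:C\to\R$ is pseudo-continuous, I would invoke Proposition \ref{pseudo-CC} to obtain, for every $i$, a continuous function $u_i:C\to\R$ together with an increasing function $h_i:u_i(C)\to\R$ such that $\theta_i=h_i\circ u_i$.

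Next I would verify that the auxiliary game with payoffs $u_i$ and the same constraint correspondences $K_i$ satisfies all the hypotheses of Theorem \ref{A-D}. The correspondences $K_i$ are unchanged, so assumption (2) holds verbatim; the functions $u_i$ are continuous by construction; and it remains to check that $u_i$ is quasi-concave in $x_i$. For fixed $x_{-i}$ I would argue on three collinear points: given $a,b\in C_i$ and $z=ta+(1-t)b$ with, say, $u_i(a,x_{-i})\le u_i(b,x_{-i})$, monotonicity of $h_i$ yields $\theta_i(a,x_{-i})\le\theta_i(b,x_{-i})$, quasi-concavity of $\theta_i(\cdot,x_{-i})$ gives $\theta_i(z,x_{-i})\ge\theta_i(a,x_{-i})$, and since $h_i$ is increasing this forces $u_i(z,x_{-i})\ge u_i(a,x_{-i})=\min\{u_i(a,x_{-i}),u_i(b,x_{-i})\}$. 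Thus $u_i(\cdot,x_{-i})$ is quasi-concave, and Theorem \ref{A-D} delivers a generalized Nash equilibrium $\hat{x}$ for the $u$-game.

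Finally I would transfer the equilibrium back to the original game. For each $i$, the point $\hat{x}_i\in K_i(\hat{x}_{-i})$ maximizes $u_i(\cdot,\hat{x}_{-i})$ over $K_i(\hat{x}_{-i})$, so $u_i(\hat{x}_i,\hat{x}_{-i})\ge u_i(x_i,\hat{x}_{-i})$ for every $x_i\in K_i(\hat{x}_{-i})$; applying the increasing function $h_i$ preserves this inequality, whence
\[
\theta_i(\hat{x}_i,\hat{x}_{-i})\ge\theta_i(x_i,\hat{x}_{-i})\quad\text{for all }x_i\in K_i(\hat{x}_{-i}),
\]
which says precisely that $\hat{x}_i$ solves \eqref{GNEP} at $\hat{x}_{-i}$. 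Therefore $\hat{x}$ is a generalized Nash equilibrium of the original game.

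The step I expect to be the main obstacle is the quasi-concavity transfer, since it is the only place where the strict monotonicity of $h_i$ is genuinely used: were $h_i$ merely nondecreasing, the implication $\theta_i(z,x_{-i})\ge\theta_i(a,x_{-i})\Rightarrow u_i(z,x_{-i})\ge u_i(a,x_{-i})$ could fail, so I would take care to extract strict monotonicity of $h_i$ from Proposition \ref{pseudo-CC}. I would also remark that the reverse implication, that Theorem \ref{A-D} is a special case of the present theorem, is immediate because continuity implies pseudo-continuity; hence this argument in fact establishes the claimed equivalence between the two existence results.
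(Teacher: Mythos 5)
Your proof is correct and essentially identical to the paper's own argument: the paper also reduces Theorem~\ref{MS-2004} to Theorem~\ref{A-D} by writing $\theta_i=h_i\circ u_i$ via Proposition~\ref{pseudo-CC}, noting that each $u_i$ inherits quasi-concavity in $x_i$, applying Theorem~\ref{A-D} to the $u$-game, and transferring the equilibrium back through the monotonicity of $h_i$. The only difference is that you spell out the quasi-concavity transfer (which the paper dismisses as ``not difficult to show'') and correctly flag that this step requires $h_i$ to be strictly increasing --- which is indeed the sense of ``increasing'' in Scalzo's representation, as the paper's other uses of Proposition~\ref{pseudo-CC} confirm.
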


Clearly, Theorem \ref{MS-2004} implies Theorem \ref{A-D}. However, the next result says that they are actually equivalent. 
\begin{theorem}
Theorems \ref{MS-2004} and  \ref{A-D} are equivalent.
\end{theorem}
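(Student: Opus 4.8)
The plan is to prove the two implications separately. One direction is immediate and is the content of the sentence preceding the statement: since every continuous function is upper and lower semi-continuous, hence upper and lower pseudo-continuous (as recorded in the Remark of Section \ref{pre}), any data satisfying the hypotheses of Theorem \ref{A-D} automatically satisfies those of Theorem \ref{MS-2004}. Thus Theorem \ref{MS-2004} implies Theorem \ref{A-D}, and all the work lies in the converse.

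For the converse I would assume Theorem \ref{A-D} and take an arbitrary generalized Nash game meeting the hypotheses of Theorem \ref{MS-2004}: each payoff $\theta_i:C\to\R$ is pseudo-continuous and quasi-concave in $x_i$, and each $K_i$ is lower and upper hemicontinuous with convex, closed and non-empty values. The idea is to replace every pseudo-continuous payoff by a continuous one inducing exactly the same best responses. Since $C=\prod_{i=1}^p C_i$ is convex it is connected, so Proposition \ref{pseudo-CC} applies globally on $C$ and yields, for each $i$, a continuous function $u_i:C\to\R$ and an increasing function $h_i:u_i(C)\to\R$ with $\theta_i=h_i\circ u_i$.

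Next I would verify that the game with payoffs $u_i$ and the \emph{same} constraints $K_i$ satisfies the hypotheses of Theorem \ref{A-D}. Continuity of $u_i$ is furnished by Proposition \ref{pseudo-CC}, and the constraint correspondences are untouched, so only the quasi-concavity of $u_i(\cdot,x_{-i})$ needs attention. This is the step I expect to be the main obstacle, though it is mild: as $h_i$ is increasing it is injective onto its range, whence $u_i=h_i^{-1}\circ\theta_i$ with $h_i^{-1}$ increasing; composing the quasi-concave map $\theta_i(\cdot,x_{-i})$ with the nondecreasing map $h_i^{-1}$ keeps all upper level sets convex, so $u_i(\cdot,x_{-i})$ is quasi-concave. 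Theorem \ref{A-D} then delivers a vector $\hat{x}\in C$ that is a generalized Nash equilibrium for the payoffs $u_i$.

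Finally I would transfer the equilibrium back to the original game. For each $i$ the component $\hat{x}_i$ maximizes $u_i(\cdot,\hat{x}_{-i})$ over $K_i(\hat{x}_{-i})$; applying the nondecreasing function $h_i$ to $u_i(\hat{x}_i,\hat{x}_{-i})\geq u_i(x_i,\hat{x}_{-i})$ gives $\theta_i(\hat{x}_i,\hat{x}_{-i})\geq\theta_i(x_i,\hat{x}_{-i})$ for every $x_i\in K_i(\hat{x}_{-i})$, so $\hat{x}_i$ solves \eqref{GNEP} for the original payoff. Hence $\hat{x}$ is a generalized Nash equilibrium of the original game, establishing Theorem \ref{MS-2004}. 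The essential point making this work is that Proposition \ref{pseudo-CC} is applied on the connected set $C$, producing a single decomposition $\theta_i=h_i\circ u_i$ valid everywhere rather than a separate representation on each slice; this both preserves continuity of $u_i$ and makes the best-response comparison for player $i$ reducible to the fixed global monotone map $h_i$.
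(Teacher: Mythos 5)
Your proposal is correct and follows essentially the same route as the paper: decompose each payoff as $\theta_i=h_i\circ u_i$ via Proposition \ref{pseudo-CC} (using connectedness of the convex set $C$), check that $u_i$ inherits quasi-concavity in $x_i$, apply Theorem \ref{A-D} to the game with payoffs $u_i$ and the same constraints $K_i$, and transfer the equilibrium back through the increasing maps $h_i$. The only difference is cosmetic: you spell out the quasi-concavity step (via $u_i=h_i^{-1}\circ\theta_i$) that the paper dismisses as ``not difficult,'' and you record the trivial direction explicitly, which the paper handles in the sentence preceding the theorem.
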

\begin{proof}
Since any convex set is connected, by Proposition \ref{pseudo-CC}, we have that for each $i$ there exists a continuous function $u_i:C\to\R$ and an increasing function $h_i:u_i(C)\to\R$ such that
\[
\theta_i=h_i\circ u_i.
\]
As $\theta_i$ is quasi-concave in $x_i$, it is not difficult to show that so is $u_i$. 
Now, the generalized Nash game defined by the functions $u_i$ and the correspondences $K_i$, admits a generalized Nash equilibrium, due to Theorem \ref{A-D}, say $\hat{x}$. Since each function $h_i$ is increasing
we have that
\[
\theta_i(\hat{x})=h_i\circ u_i(\hat{x})\geq h_i\circ u_i(x_i,\hat{x}_{-i})=\theta_i(x_i,\hat{x}_{-i}),\mbox{ for all }x_i\in K_i(\hat{x}_i).
\]
The result follows.
\end{proof}
It is clear that any Nash game is a generalized Nash game. Moreover, Theorem \ref{A-D} implies Theorem \ref{D}, and Theorem \ref{MS-2004} implies Theorem \ref{MS-2007}. Thus, as a direct consequence of the previous result we have that Theorem \ref{D} is equivalent to Theorem \ref{MS-2007}. Furthermore, the following result establishes that Theorem \ref{A-D} is equivalent to Theorem \ref{D} on Banach spaces.  
\begin{theorem}\label{A-D-D}
Theorem \ref{D} implies Theorem \ref{A-D} on Banach spaces.
\end{theorem}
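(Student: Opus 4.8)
The plan is to reduce the generalized game to an ordinary one, absorbing the coupling constraint into the payoff by means of the distance function; this is where the Banach (normed) structure enters. Fix a generalized Nash game satisfying the hypotheses of Theorem \ref{A-D}. For each player $i$ I would set
\[
d_i(x)=\dist\big(x_i,K_i(x_{-i})\big)\quad\text{and}\quad v_i(x_{-i})=\max_{z\in K_i(x_{-i})}\theta_i(z,x_{-i}),
\]
the latter being well defined because $K_i(x_{-i})$ is a closed subset of the compact set $C_i$, hence compact. The guiding idea is to construct an ordinary game whose payoffs coincide with $\theta_i$ on the feasible set $K_i(x_{-i})$ and are \emph{strictly} below the feasible optimum $v_i(x_{-i})$ off it, so that feasibility is enforced automatically at any Nash equilibrium.

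First I would record three properties. Since $K_i$ is continuous with non-empty compact values and $\theta_i$ is continuous, the Berge maximum theorem (Theorem \ref{t1}) gives that $v_i$ is continuous. Since $K_i$ is both lower and upper hemicontinuous with compact values, the map $(x_i,x_{-i})\mapsto d_i(x)$ is jointly continuous: lower hemicontinuity yields upper semicontinuity of $x_{-i}\mapsto\dist(x_i,K_i(x_{-i}))$, upper hemicontinuity together with compactness of the values yields lower semicontinuity, and the $1$-Lipschitz dependence on $x_i$ upgrades continuity in $x_{-i}$ to joint continuity. Finally, as each $K_i(x_{-i})$ is convex, the function $x_i\mapsto d_i(x)$ is convex.

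Next I would introduce the ordinary game with payoffs
\[
\tilde\theta_i(x)=\min\big\{\theta_i(x),\;v_i(x_{-i})-d_i(x)\big\}.
\]
Each $\tilde\theta_i$ is continuous by the previous step, and it is quasi-concave in $x_i$: indeed $\theta_i(\cdot,x_{-i})$ is quasi-concave by hypothesis, while $v_i(x_{-i})-d_i(\cdot,x_{-i})$ is concave (a constant minus a convex function) and hence quasi-concave, and the pointwise minimum of quasi-concave functions is quasi-concave because $U_{\min\{f,g\}}(\lambda)=U_f(\lambda)\cap U_g(\lambda)$. Thus the modified game satisfies the hypotheses of Theorem \ref{D}, which produces a Nash equilibrium $\hat x$. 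To see that $\hat x$ is a generalized Nash equilibrium of the original game, note that on $K_i(\hat x_{-i})$ one has $d_i=0$, so $\tilde\theta_i=\min\{\theta_i,v_i(\hat x_{-i})\}=\theta_i$ there (since $\theta_i\le v_i(\hat x_{-i})$), whence $\max_{x_i\in C_i}\tilde\theta_i(x_i,\hat x_{-i})\ge v_i(\hat x_{-i})$; while if $x_i\notin K_i(\hat x_{-i})$ then $d_i(x_i,\hat x_{-i})>0$ gives $\tilde\theta_i(x_i,\hat x_{-i})\le v_i(\hat x_{-i})-d_i(x_i,\hat x_{-i})<v_i(\hat x_{-i})$. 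Therefore every maximizer of $\tilde\theta_i(\cdot,\hat x_{-i})$ over $C_i$ lies in $K_i(\hat x_{-i})$; in particular $\hat x_i\in K_i(\hat x_{-i})$ and $\theta_i(\hat x_i,\hat x_{-i})=\tilde\theta_i(\hat x_i,\hat x_{-i})=v_i(\hat x_{-i})=\max_{z\in K_i(\hat x_{-i})}\theta_i(z,\hat x_{-i})$, i.e. $\hat x_i$ solves \eqref{GNEP} associated with $\hat x_{-i}$. As this holds for each $i$, $\hat x$ is a generalized Nash equilibrium.

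The main obstacle, and the reason a plain penalization $\theta_i-\gamma d_i$ fails, is preserving quasi-concavity in $x_i$: adding a concave penalty to a merely quasi-concave function generally destroys quasi-concavity, so the modified payoff would no longer fit Theorem \ref{D}. The device that circumvents this is to \emph{cap} instead of subtract, replacing the additive penalty by the minimum with the concave function $v_i(x_{-i})-d_i(x)$. The cost is that the cap must be continuous, which is precisely what the Berge maximum theorem guarantees for $v_i$, and that the distance function must be jointly continuous, which is exactly where the continuity of $K_i$ and the compactness of its values are used; these two points are the technical heart of the argument.
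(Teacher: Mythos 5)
Your proof is correct, but it follows a genuinely different route from the paper's. The paper obtains the implication by chaining equivalences already assembled in the text: Lemma \ref{D-minmax} shows that Theorem \ref{D} implies Proposition \ref{minmax-pseudo-cont} (via an auxiliary two-player game whose second payoff is $-\|x_2-x_1\|$), Remark \ref{minmax-pseudo->Fan-Browder} upgrades that to the Fan--Browder theorem (Theorem \ref{Fan-Browder}), and then the classical implications Fan--Browder $\Rightarrow$ Kakutani (Theorem \ref{KFFPT}) $\Rightarrow$ Arrow--Debreu (Theorem \ref{A-D}) are invoked as known facts. You instead reduce the generalized game \emph{directly} to an ordinary one: capping each payoff by $\min\{\theta_i(x),\,v_i(x_{-i})-d_i(x)\}$, applying Theorem \ref{D} once, and checking that feasibility is self-enforcing at any equilibrium of the capped game. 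Both arguments exploit the norm, which is where the Banach hypothesis enters -- the paper through the auxiliary player's payoff $-\|x_2-x_1\|$, you through the distance $\dist(x_i,K_i(x_{-i}))$ to the constraint sets. What the paper's route buys is brevity given its machinery and coherence with the paper's theme of equivalences; what yours buys is self-containedness, since you need neither the nontrivial external implications Fan--Browder $\Rightarrow$ Kakutani $\Rightarrow$ Arrow--Debreu nor Proposition \ref{minmax-pseudo-cont}, only Berge's maximum theorem (Theorem \ref{t1}) and elementary properties of distance functions. Your min-cap device, replacing additive penalization (which would destroy quasi-concavity in $x_i$) by a minimum with the concave function $v_i(x_{-i})-d_i(\cdot,x_{-i})$, together with the verification that $d_i$ is jointly continuous and convex in $x_i$, is sound and is the technical heart of your argument; all of these steps check out.
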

In order to prove the previous result, we need the following lemma, which is inspired by Theorem 4.1 in \cite{BC-2021}.

\begin{lemma}\label{D-minmax}
Theorem \ref{D} implies Proposition \ref{minmax-pseudo-cont} on Banach spaces.
\end{lemma}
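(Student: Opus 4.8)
The plan is to deduce the minimax inequality of Proposition \ref{minmax-pseudo-cont} directly from the Nash existence result in Theorem \ref{D}, using Scalzo's representation to trade the pseudo-continuous datum for a genuinely continuous one, and exploiting the Banach norm to manufacture a coupling that produces a self-best-response point. First I would invoke Proposition \ref{pseudo-CC}: since $X\times X$ is convex, hence connected, the pseudo-continuous function $f$ factors as $f=h\circ u$ with $u\colon X\times X\to\R$ continuous and $h$ increasing. Because $h$ is increasing, $U_{u(x,\cdot)}(y)=U_{f(x,\cdot)}(y)$ for every $y\in X$, and hence assumption (ii) transfers, giving that $u(x,\cdot)$ is quasi-concave for each $x$, exactly as in the proof of Proposition \ref{min-min}.

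Next I set up a two-player game inside the Banach space $E\supset X$. Both players have strategy set $X$, which is compact, convex and non-empty by hypothesis. Player~$1$ chooses $x_1\in X$ with payoff $\theta_1(x_1,x_2)=u(x_2,x_1)$, while player~$2$ chooses $x_2\in X$ with payoff $\theta_2(x_1,x_2)=-\|x_1-x_2\|^2$. The whole point of passing to a Banach space is this penalty term: the squared norm is continuous in the product topology and concave in $x_2$, so $\theta_1$ is continuous and quasi-concave in $x_1$ (by the previous step) and $\theta_2$ is continuous and quasi-concave in $x_2$. Theorem \ref{D} then yields a Nash equilibrium $(\hat x_1,\hat x_2)$.

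I then read off the equilibrium conditions. Player~$2$'s payoff is non-positive and vanishes only when $x_2=x_1$; since $\hat x_1\in X$ is feasible for player~$2$, the unique best response forces $\hat x_2=\hat x_1$. Player~$1$'s optimality gives $u(\hat x_2,\hat x_1)\ge u(\hat x_2,y)$ for all $y\in X$, which after substituting $\hat x_2=\hat x_1$ reads $u(\hat x_1,\hat x_1)\ge u(\hat x_1,y)$. Applying the increasing map $h$ turns this into $f(\hat x_1,\hat x_1)\ge f(\hat x_1,y)$ for every $y$, so $\max_{y}f(\hat x_1,y)=f(\hat x_1,\hat x_1)$ and therefore $\min_{x}\max_{y}f(x,y)\le \max_{y}f(\hat x_1,y)=f(\hat x_1,\hat x_1)\le\max_{x}f(x,x)$.

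The main obstacle is the diagonal coupling: I must guarantee that the penalty forces $\hat x_2=\hat x_1$ at equilibrium while keeping player~$2$'s payoff continuous and quasi-concave in the topology for which $X$ is compact. This is precisely where the Banach structure, namely a globally defined continuous norm minimized only on the diagonal, is indispensable, and it explains why the statement is restricted to Banach spaces; a secondary point to verify is the attainment of the outer minimum, which follows from the pseudo-continuity of the value function granted by Theorem \ref{ext-Berge}.
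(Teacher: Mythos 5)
Your proposal is correct and follows essentially the same route as the paper: reduce to a continuous function via Proposition \ref{pseudo-CC}, build a two-player game on the Banach space in which player~1 maximizes $f(x_2,\cdot)$ (the paper uses $\theta_1(x_1,x_2)=f(x_2,x_1)-f(x_2,x_2)$, you use $u(x_2,x_1)$) and player~2 is penalized by the distance to player~1's strategy (the paper uses $-\|x_2-x_1\|$, you use the squared norm), then apply Theorem \ref{D} and observe that the equilibrium must lie on the diagonal. These differences are cosmetic, and your extra remark on attainment of the outer minimum via Theorem \ref{ext-Berge} is a harmless addition.
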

\begin{proof}
By Proposition \ref{pseudo-CC}, we assume without loss of generality that $f$ is continuous. Now,
consider the two-player game defined by the strategy sets and the payoff functions as follows
\[
C_1=C_2=X,~\theta_1(x_1,x_2)=f(x_2,x_1)-f(x_2,x_2) \mbox{ and }\theta_2(x_1,x_2)=-\|x_2-x_1\|,
\]
where $\|\cdot\|$ is the norm. It is clear that this game satisfies all assumption of  Theorem \ref{D}, thus there exists $(\hat{x}_1,\hat{x}_2)\in X\times X$ such that
\[
\theta_1(\hat{x}_2,\hat{x}_1)\geq \theta_1(\hat{x}_2,x_1)\mbox{ and }\|\hat{x}_2-\hat{x}_1\|\leq \|x_2-\hat{x}_1\|,~\mbox{for all }x_1,x_2\in X.
\]
From the second part we deduce that $\hat{x}_1=\hat{x}_2=\hat{x}$. Thus, using this in the first part we obtain $0\geq \theta_1(\hat{x},x_1)$ for all $x_1\in X$. 
Therefore,
\[
\max_{z\in X}f(z,z)\geq f(\hat{x},\hat{x})\geq  f(\hat{x},y),\mbox{ for all }y\in X. 
\]
\end{proof}

\begin{proof}[Proof of Theorem \ref{A-D-D}]
Since Theorem \ref{A-D} is a consequence of Theorem \ref{KFFPT} and this is consequence of Fan-Browder's theorem. The result follows from  Lemma 
\ref{D-minmax} and Remark \ref{minmax-pseudo->Fan-Browder}.
\end{proof}

\bibliographystyle{abbrv}


\begin{thebibliography}{10}

\bibitem{quasiconvex}
S.~Al-Homidan, N.~Hadjisavvas, and L.~Shaalan.
\newblock Transformation of quasiconvex functions to eliminate local minima.
\newblock {\em J. Optim. Theory and Appl.}, 177(1):93--105, 2018.

\bibitem{aliprantis06}
C.~D. Aliprantis and K.~C. Border.
\newblock {\em Infinite Dimensional Analysis: a Hitchhiker's Guide}.
\newblock Springer-Verlag Berlin Heidelberg, 2006.

\bibitem{ANH2011}
L.~Q. Anh, P.~Q. Khanh, and D.~T.~M. Van.
\newblock Well-posedness without semicontinuity for parametric quasiequilibria
  and quasioptimization.
\newblock {\em Computers and Mathematics with Applications}, 62(4):2045--2057,
  2011.

\bibitem{Arrow-Debreu}
K.~J. Arrow and G.~Debreu.
\newblock Existence of an equilibrium for a competitive economy.
\newblock {\em Econometrica}, 22(3):265--290, 1954.

\bibitem{BCC}
O.~Bueno, C.~Calder\'on, and J.~Cotrina.
\newblock {A note on jointly convex Nash games}.
\newblock {\em Preprint, pp.11}, 2021.

\bibitem{BC-2021}
O.~Bueno and J.~Cotrina.
\newblock Existence of projected solutions for generalized nash equilibrium
  problems.
\newblock {\em Journal of Optimization Theory and Applications},
  191(1):344--362, 2021.

\bibitem{JCYG}
J.~Cotrina and Y.~Garc{\'\i}a.
\newblock {Equilibrium problems: existence results and applications}.
\newblock {\em Set-Valued Var. Anal.}, 2017.

\bibitem{JC-MT-JZ}
J.~Cotrina, M.~Théra, and J.~Zúñiga.
\newblock {An Existence Result for Quasi-equilibrium Problems via Ekeland’s
  Variational Principle}.
\newblock {\em Journal of Optimization Theory and Applications},
  187(2):336--355, November 2020.

\bibitem{Dansupta}
P.~Dasgupta and E.~Maskin.
\newblock The existence of equilibrium in discontinuous economic games, part i
  (theory).
\newblock {\em Review of Economic Studies}, 53(1):1--26, 1986.
\newblock Reprinted in K. Binmore and P. Dasgupta (eds.), Economic
  Organizations as Games, Oxford: Basil Blackwell, 1986, pp. 48-82.

\bibitem{Debreu}
G.~Debreu.
\newblock A social equilibrium existence theorem.
\newblock {\em Proceedings of the National Academy of Sciences of the United
  States of America}, 38(10):886--893, 1952.

\bibitem{Debreu64}
G.~Debreu.
\newblock Continuity properties of paretian utility.
\newblock {\em International Economic Review}, 5(3):285--293, 1964.

\bibitem{Facchinei2007}
F.~Facchinei and C.~Kanzow.
\newblock {Generalized Nash equilibrium problems}.
\newblock {\em 4OR}, 5(3):173--210, Sep 2007.

\bibitem{Fan}
K.~Fan.
\newblock Fixed-point and minimax theorems in locally convex topological linear
  spaces.
\newblock {\em Proceedings of the National Academy of Sciences of the United
  States of America}, 38(2):121--126, 1952.

\bibitem{Glicksberg}
I.~L. Glicksberg.
\newblock A further generalization of the kakutani fixed point theorem, with
  application to nash equilibrium points.
\newblock {\em Proceedings of the American Mathematical Society},
  3(1):170--174, 1952.

\bibitem{Kfan}
{K. Fan}.
\newblock {A minimax inequality and applications}.
\newblock {\em Inequalities. III, O. Shisha, ed., Academic Press, New York},
  pages 103--113, 1972.

\bibitem{Lassonde-b}
M.~Lassonde.
\newblock {Minimax et Point Fixes}.
\newblock 2005.
\newblock unpublished.

\bibitem{Morgan2004}
J.~Morgan and V.~Scalzo.
\newblock {Existence of equilibria in discontinuous abstract economies}.
\newblock 2004.
\newblock Preprint 53--2004, Dipartimento di Matematica e Applicazioni R.
  Caccioppoli.

\bibitem{Morgan-Scalzo-2004}
J.~Morgan and V.~Scalzo.
\newblock Pseudocontinuity in optimization and nonzero-sum games.
\newblock {\em Journal of Optimization Theory and Applications},
  120(1):181--197, 2004.

\bibitem{MS-2006}
J.~Morgan and V.~Scalzo.
\newblock Discontinuous but well‐posed optimization problems.
\newblock {\em SIAM Journal on Optimization}, 17(3):861--870, 2006.

\bibitem{Morgan-Scalzo-2007}
J.~Morgan and V.~Scalzo.
\newblock Asymptotical behavior of finite and possible discontinuous economies.
\newblock {\em Economic Theory}, 33(2):407--412, 2007.

\bibitem{MORGAN2007}
J.~Morgan and V.~Scalzo.
\newblock {Pseudocontinuous functions and existence of Nash equilibria}.
\newblock {\em Journal of Mathematical Economics}, 43(2):174--183, 2007.

\bibitem{Nash}
J.~Nash.
\newblock Non-cooperative games.
\newblock {\em Annals of Mathematics}, 54(2):286--295, 1951.

\bibitem{Qiu-Peng}
X.~Qiu and D.~Peng.
\newblock Some relaxed solutions of minimax inequality for discontinuous game.
\newblock In D.-F. Li, X.-G. Yang, M.~Uetz, and G.-J. Xu, editors, {\em Game
  Theory and Applications}, pages 86--97, Singapore, 2017. Springer Singapore.

\bibitem{Rader}
T.~Rader.
\newblock The existence of a utility function to represent preferences.
\newblock {\em The Review of Economic Studies}, 30(3):229--232, 1963.

\bibitem{Reny}
P.~J. Reny.
\newblock {On the Existence of Pure and Mixed Strategy Nash Equilibria in
  Discontinuous Games}.
\newblock {\em Econometrica}, 67(5):1029--1056, September 1999.

\bibitem{Rosen}
J.~B. Rosen.
\newblock Existence and uniqueness of equilibrium points for concave n-person
  games.
\newblock {\em Econometrica}, 33(3):520--534, 1965.

\bibitem{Scalzo}
V.~Scalzo.
\newblock Pseudocontinuity is necessary and sufficient for order-preserving
  continuous representations.
\newblock {\em Real Analysis Exchange}, 34(1):239--248, 2009.

\bibitem{Tian95}
G.~Tian and J.~Zhou.
\newblock {Transfer continuities, generalizations of the Weierstrass and
  maximum theorems: a full characterization}.
\newblock {\em J. Math. Econ.}, 24:281--303, 1995.

\bibitem{Wang2016}
R.~Wangkeeree, T.~Bantaojai, and P.~Yimmuang.
\newblock Semicontinuity and closedness of parametric generalized lexicographic
  quasiequilibrium problems.
\newblock {\em Journal of Inequalities and Applications}, (1):44, 2016.

\bibitem{YP83}
N.~C. Yannelis and N.~Prabhakar.
\newblock Existence of maximal elements and equilibria in linear topological
  spaces.
\newblock {\em Journal of Mathematical Economics}, 12(3):233--245, 1983.

\end{thebibliography}
\end{document}